\documentclass[submission,final,copyright,creativecommons]{eptcs}
\AddToHook{package/refstyle/before}{\DeclareRobustCommand\eqref{\relax}\let\eqref\relax}

\usepackage{paralist}

\def\sagdir{sag}
\def\rootdir{.}
\usepackage{import}

\usepackage[english]{babel} %
\usepackage[style=russian]{csquotes}

\usepackage[usenames,dvipsnames,table]{xcolor}
\usepackage[outline]{contour}

\usepackage{ifthen}
\usepackage{iftex}
\usepackage{ifxetex}
\ifxetex
\else
    \usepackage[utf8]{inputenc}
\fi

\usepackage{subcaption}

\usepackage{blkarray, array}

\usepackage[T1]{fontenc}
\usepackage{lipsum}
\usepackage{xspace}
\usepackage{amsmath}
\usepackage{amsthm}
\usepackage{amssymb} %
\usepackage{mathtools}

\usepackage[capitalize]{cleveref}

\usepackage{multirow}
\usepackage{prftree}

\usepackage{comment}
\usepackage{accents}

\usepackage{ebproof}
\usepackage{units}

\usepackage{todonotes}

\usepackage{pgfplots}
\pgfplotsset{compat=1.16}
\usepackage[framemethod=TikZ]{mdframed}
\usepackage{graphicx}
\usepackage{calc}

\usepackage{adjustbox}
\usepackage[section]{placeins}
\usepackage{eso-pic}
\usepackage{capt-of}
\usepackage{wrapfig}
\usepackage[normalem]{ulem}
\usepackage{newclude}
\usepackage{refstyle}

\usepackage{stmaryrd}
\usepackage{pifont}
\usepackage{relsize}

\usepackage{makeidx}

\usepackage{longtable}

\usepackage{multicol}

\usepackage{caption}
\usepackage{transparent}
\usepackage{datetime2}
\usepackage{anyfontsize}
\usepackage{ifmtarg}

\usepackage{underscore}

\graphicspath{
    {\rootdir/figures/},
    {\rootdir/pics/},
    {\rootdir/slides/},
    {\rootdir/videos/generated/},
    {\rootdir/figures/reits/},
    {\rootdir/figures/firstpaper/},
    {\rootdir/figures/uncertainty/},
    {\rootdir/papers/uncertainty/},
    {},
    {../}}

\newboolean{statuscolors}
\newboolean{instructors}
\newboolean{showslides} %
\newboolean{showproofs}
\newboolean{debugimages}
\newboolean{cachepdf}
\newboolean{devel} %

\newboolean{codeexercises}
\newboolean{githublink}

\newcommand{\instructors}[1]{\ifbool{instructors}{%
        #1
    }{}}

\newcommand{\showslides}[1]{\ifbool{showslides}{#1}{}}

\newcommand{\showproofs}[1]{\ifbool{showproofs}{#1}{}}
\newcommand{\devel}[1]{\ifbool{devel}{%
        \clearpage
        \pagecolor{yellow!7}
        #1
        \clearpage
        \pagecolor{white}
    }{}
}
\newcommand{\toexclude}[1]{\ifbool{devel}{%
        \clearpage
        \pagecolor{gray}
        #1
        \clearpage
        \pagecolor{white}
    }{}
}

\newcommand{\codeexercises}[1]{%
    \ifbool{codeexercises}{%
        \FloatBarrier
        \clearpage
        \pagecolor{blue!7}
        #1
        \clearpage
        \pagecolor{white}
    }{}%
}

\newlength{\dpiconwidth}
\setlength{\dpiconwidth}{1.1cm}

\crefformat{equation}{(#2#1#3)}
\crefformat{equation}{Equation~(#2#1#3)}

\usepackage[outer]{showlabels}

\newcommand{\linkgithub}{
    \ifbool{githublink}{%
        \usebox{\urlbox}%
    }{}
}
\newcommand{\draftnotice}{
    \ifbool{githublink}{%
        \footnotesize You are reading a draft compiled on \DTMnow%
    }{}%
}

\let\oldsubseteq\subseteq
\renewcommand{\subseteq}{\mathrel{\oldsubseteq}\linebreak[0]}
\let\oldsupseteq\supseteq
\renewcommand{\supseteq}{\mathrel{\oldsupseteq}\linebreak[0]}
\let\oldsubset\subset
\renewcommand{\subset}{\mathrel{\oldsubset}\linebreak[0]}
\let\oldsupset\supset
\renewcommand{\supset}{\mathrel{\oldsupset}\linebreak[0]}

\pdfsuppresswarningpagegroup=1

\usepackage{filemod}

\usepackage{trimclip}

\usepackage{ulem}
\usepackage{silence}
\WarningsOff[contour]

\usepackage{tikz}

\usetikzlibrary{positioning,
  intersections,
  hobby,
  patterns,
  calc,
  decorations.pathmorphing,
  decorations.markings,
  shadows,
  shapes,
  cd,
  decorations.markings,
  positioning,
  arrows.meta,
  shapes,
  calc,
  fit,
  quotes,
backgrounds}

\usepackage{tikzpagenodes}

\tikzcdset{arrow style=tikz}
\tikzstyle{arrowmorphismstyle} = [-{Triangle[width=3.5pt,length=3.5pt]},draw=morphisms]
\tikzstyle{arrownorphismstyle} = [-{Triangle[width=3.5pt,length=3.5pt]},dashed, draw=norphisms]
\tikzstyle{arrowmorphismstylemapsto} = [|-{Triangle[width=3.5pt,length=3.5pt]},draw=morphisms]
\tikzstyle{arrowfunctorstyle} = [-{Triangle[width=3.5pt,length=3.5pt]},draw=functors]
\tikzstyle{block} = [draw=morphisms, rectangle, minimum height=2em, minimum width=3em, thick]
\tikzstyle{blockgroup} = [draw=morphisms, rectangle, minimum height=2em, minimum width=3em, dotted, thick]
\tikzstyle{resblock} = [draw=red, rectangle, minimum height=2em, minimum width=3em, thick]
\tikzstyle{compblock} = [draw=blue, rectangle, minimum height=2em, minimum width=3em, thick]
\tikzstyle{effblock} = [draw=black, rectangle, minimum height=2em, minimum width=3em, thick]
\tikzstyle{tupblock} = [draw=morphisms, rectangle, minimum height=2em, minimum width=3em, thick]
\tikzstyle{styleobjects} = [draw=objects, thick]
\tikzstyle{morblock} = [draw=morphisms, rectangle, minimum height=2em, minimum width=3em, thick, rounded corners=2]
\tikzstyle{norblock} = [draw=norphisms, rectangle, minimum height=2em, minimum width=3em, thick, rounded corners=2]

\tikzstyle{blockk} = [draw, rectangle, minimum height=2.5em, minimum width=3.5em]
\tikzstyle{block1} = [draw, rectangle, minimum height=1.5em, minimum width=2.5em]
\tikzstyle{blockDyn} = [draw, rectangle, minimum height=2.5em, minimum width=3.5em, align=center, inner sep=10pt, thick, fill=white, copy shadow={draw=black,fill=black,opacity=1,shadow xshift=0.5ex,shadow yshift=-0.5ex}]
\tikzstyle{blockAlg} = [draw, rectangle, minimum height=1.5em, minimum width=2.5em, align=center, inner sep=10pt, thick]
\tikzstyle{sum} = [draw,circle]
\tikzstyle{input} = [coordinate]
\tikzstyle{output} = [coordinate]
\tikzstyle{pinstyle} = [pin edge={to-,thin,black}]
\tikzcdset{every label/.append style={font=\normalsize}}

\tikzset{fcname/.store in =\fcname, fcname={}}
\tikzset{funame/.store in =\funame, funame={}}
\tikzset{rcname/.store in =\rcname, rcname={}}
\tikzset{runame/.store in =\runame, runame={}}
\tikzset{whereres/.store in =\whereres, whereres=0.5}
\tikzset{wherefun/.store in =\wherefun, wherefun=0.5}
\tikzset{relres/.store in =\relres, relres={right}}
\tikzset{relfun/.store in =\relfun, relfun={left}}
\tikzset{posres/.store in =\posres, posres=0.95}
\tikzset{posfun/.store in =\posfun, posfun=0.95}
\tikzset{loos/.store in =\loos, loos=2}
\tikzset{feedback/.store in =\feedback, feedback=0}
\tikzset{
  DP/.style={%
    label/.style={
      font=\everymath\expandafter{\the\everymath\scriptstyle},
      inner sep=5pt,
      node distance=2pt and -2pt},
    semithick,
    node distance=1 and 1,
    rconn/.style={color=white,opacity=0.0,postaction={decorate}, shorten <=3.2pt, shorten >= 0.8,
    decoration={markings,
    mark= at position 0 with {
      \coordinate (a);
    },
    mark=at position .5 with
        {
        \ifthenelse{\equal{\feedback}{1}}{\def\angleOut{-90}\def\angleIn{-90}}{\def\angleOut{0}\def\angleIn{180}}
        \coordinate (b);
        \draw[dashed,\dpred,opacity=1.0] (a) to[out=\angleOut,in=\angleIn,looseness=\loos]
        node[pos=\posres,\relres=\whereres mm,\dpred,opacity=1,fill=white,inner sep=1pt,outer sep=1pt]{\rcname} (b);
      },
      mark= at position 1 with
        {
        \ifthenelse{\equal{\feedback}{1}}{\def\angleOut{0}\def\angleIn{0}}{\def\angleOut{180}\def\angleIn{0}}
        \ifthenelse{\equal{\feedback}{1}}{\def\symbol{\succeq}}{\def\symbol{\preceq}}
        \coordinate (c);
        \draw[\dpgreen,opacity=1.0] (c) to[out=\angleOut,in=\angleIn,looseness=\loos]
        node[pos=\posfun,\relfun=\wherefun mm,\dpgreen,opacity=1,fill=white,inner sep=1pt,outer sep=1pt]{\fcname} (b){}; %
        \node[draw,circle,inner sep=0.5pt,color=black,fill=white,opacity=1.0,line width=0.2mm] at (b) (nodepreceq) {$\symbol$};
      }
    }},
    runconn/.style={color=\dpred,dashed,postaction={decorate},
    decoration={markings,
    mark= at position 1 with {
      \coordinate (a);
      \draw[\dpred,opacity=1.0,dashed] ($(a)+(0.05,0)$) --++ (0.5,0) node[\relres,pos=\posres]{\runame};}
    }
    },
    funconn/.style={color=white,postaction={decorate},
    decoration={markings,
    mark= at position 0 with {
      \coordinate (a);
      \draw[\dpgreen] ($(a)+(-0.05,0)$) -- ($(a)+(-0.5,0)$) node[\relfun,pos=\posfun]{\funame};}
    }
    },
    execute at begin picture={\tikzset{
      x=\dpx, y=\dpy,
      every fit/.style={inner xsep=\dpx, inner ysep=\dpy}}}
  },
  dpx/.store in=\dpx,
  dpx = 1.5cm,
  dpy/.store in=\dpy,
  dpy = 1.5ex,
  dp port sep/.store in=\dpportsep,
  dp port sep=2,
  dp port length/.store in=\dpportlen,
  dp port length=4pt,
  dp min width/.store in=\dpminwidth,
  dp min width=0.5cm,
  dp rounded corners/.store in=\dpcorners,
  dp rounded corners=2pt,
  dp small/.style={dp port sep=1, dp port length=2.5pt, dpx=.4cm, dp min width=.4cm, dpy=.7ex},
  dp/.code 2 args={%
    \pgfmathsetlengthmacro{\dpheight}{\dpportsep * (max(#1,#2)) * \dpy}
    \pgfkeysalso{draw,%
      minimum width=\dpminwidth,%
      minimum height=\dpheight,%
      outer sep=0pt,%
      inner sep=5pt,%
      rounded corners=\dpcorners,
      thick,
      prefix after command={\pgfextra{\let\fixname\tikzlastnode}},
      append after command={\pgfextra{\draw
      \ifnum #1=0{} \else foreach \i in {1,...,#1} {
        ($(\fixname.north west)!{\i/(#1+1)}!(\fixname.south west)$) +(0,0) node[solid,left,circle,color=\dpgreen,draw,fill=\dpgreen,scale=0.3] {} coordinate (\fixname_fun\i) -- +(0,0) coordinate (\fixname_fun\i')}\fi %
      \ifnum #2=0{} \else foreach \i in {1,...,#2} {
        ($(\fixname.north east)!{\i/(#2+1)}!(\fixname.south east)$) +(0,0) coordinate (\fixname_res\i') -- +(0,0) node[solid,right,circle,color=\dpred,draw,fill=\dpred,scale=0.3] {} coordinate (\fixname_res\i)}\fi;
      }}}
  },
  dp name/.style={append after command={\pgfextra{\node[label=center,inner sep=2pt,fill=white] at (\fixname) {#1};}}}
}

\tikzset{
  oriented WD/.style={%
    every to/.style={out=0,in=180,draw},
    label/.style={
      font=\everymath\expandafter{\the\everymath\scriptstyle},
      inner sep=0pt,
      node distance=2pt and -2pt},
    semithick,
    node distance=1 and 1,
    decoration={markings, mark=at position .5 with {\arrow{stealth};}},
    ar/.style={postaction={decorate}},
    execute at begin picture={\tikzset{
      x=\bbx, y=\bby,
      every fit/.style={inner xsep=\bbx, inner ysep=\bby}}}
  },
  bbx/.store in=\bbx,
  bbx = 1.5cm,
  bby/.store in=\bby,
  bby = 1.75ex,
  bb port sep/.store in=\bbportsep,
  bb port sep=2,
  bb port length/.store in=\bbportlen,
  bb port length=4pt,
  bb min width/.store in=\bbminwidth,
  bb min width=1cm,
  bb rounded corners/.store in=\bbcorners,
  bb rounded corners=2pt,
  bb small/.style={bb port sep=1, bb port length=2.5pt, bbx=.4cm, bb min width=.4cm, bby=.7ex},
  bb/.code 2 args={%
    \pgfmathsetlengthmacro{\bbheight}{\bbportsep * (max(#1,#2)+1) * \bby}
    \pgfkeysalso{draw,minimum height=\bbheight,minimum width=\bbminwidth,outer sep=0pt,
      rounded corners=\bbcorners,thick,
      prefix after command={\pgfextra{\let\fixname\tikzlastnode}},
      append after command={\pgfextra{\draw
      \ifnum #1=0{} \else foreach \i in {1,...,#1} {
        ($(\fixname.north west)!{\i/(#1+1)}!(\fixname.south west)$) +(-\bbportlen,0) coordinate (\fixname_in\i) -- +(\bbportlen,0) coordinate (\fixname_in\i')}\fi %
      \ifnum #2=0{} \else foreach \i in {1,...,#2} {
        ($(\fixname.north east)!{\i/(#2+1)}!(\fixname.south east)$) +(-\bbportlen,0) coordinate (\fixname_out\i') -- +(\bbportlen,0) coordinate (\fixname_out\i)}\fi;
      }}}
  },
  bb name/.style={append after command={\pgfextra{\node[anchor=north] at (\fixname.north) {#1};}}}
}

\tikzset{
  tick/.style={postaction={
    decorate,
    decoration={markings, mark=at position 0.5 with {\draw[-] (0,.4ex) -- (0,-.4ex);}}}
  }
}

\newcommand{\includesag}[1]{%
  \saginclude{#1}
}

\newcommand{\saginclude}[1]{%
    \def\pdffile{\sagdir/\detokenize{#1}.pdf}%
    \def\srcfile{\sagdir/\detokenize{#1}.tikz}%
    \ifbool{cachepdf}{%
        \IfFileExists{\pdffile}{%
            \Filemodcmp{\pdffile}{\srcfile}{%
                \includegraphics[scale=1]{\pdffile}%
            }{%
                \PackageWarning{ACT4E}{TikZ file is more recent than outdated pdf cache \pdffile}%
                \input{\srcfile}%
            }%
        }{\input{\srcfile}}%
    }{%
        \input{\srcfile}%
    }%
}

\usetikzlibrary{calc}

\newlength{\brickwidth}
\newlength{\brickheight}
\newlength{\brickdia}
\newlength{\brickdiaheight}
\newlength{\brickmultipliedx}
\newlength{\brickmultipliedy}
\newlength{\halfbrickwidth}
\setlength{\brickheight}{9.6mm}
\setlength{\brickwidth}{8mm}
\setlength{\brickdia}{2.8mm}
\setlength{\brickdiaheight}{1mm}
\setlength{\halfbrickwidth}{0.5\brickwidth}

\newlength{\hatchspread}
\newlength{\hatchthickness}
\newlength{\hatchshift}
\newcommand{\hatchcolor}{}
\tikzset{hatchspread/.code={\setlength{\hatchspread}{#1}},
  hatchthickness/.code={\setlength{\hatchthickness}{#1}},
  hatchshift/.code={\setlength{\hatchshift}{#1}},%
  hatchcolor/.code={\renewcommand{\hatchcolor}{#1}}}
\tikzset{hatchspread=3pt,
  hatchthickness=0.4pt,
  hatchshift=0pt,%
  hatchcolor=black}
\pgfdeclarepatternformonly[\hatchspread,\hatchthickness,\hatchshift,\hatchcolor]%
{custom north west lines}%
{\pgfqpoint{\dimexpr-2\hatchthickness}{\dimexpr-2\hatchthickness}}%
{\pgfqpoint{\dimexpr\hatchspread+2\hatchthickness}{\dimexpr\hatchspread+2\hatchthickness}}%
{\pgfqpoint{\dimexpr\hatchspread}{\dimexpr\hatchspread}}%
{%
  \pgfsetlinewidth{\hatchthickness}
  \pgfpathmoveto{\pgfqpoint{0pt}{\dimexpr\hatchspread+\hatchshift}}
  \pgfpathlineto{\pgfqpoint{\dimexpr\hatchspread+0.15pt+\hatchshift}{-0.15pt}}
  \ifdim \hatchshift > 0pt
  \pgfpathmoveto{\pgfqpoint{0pt}{\hatchshift}}
  \pgfpathlineto{\pgfqpoint{\dimexpr0.15pt+\hatchshift}{-0.15pt}}
  \fi
  \pgfsetstrokecolor{\hatchcolor}
  \pgfusepath{stroke}
}

\pgfdeclarelayer{background}
\pgfdeclarelayer{foreground}
\pgfsetlayers{background,main,foreground}

\newcommand{\rope}[7]{
\begin{tikzpicture}[n/.style={circle,draw,minimum size=2mm,inner sep=0, fill=#1}, remember picture]
\ifthenelse{\lengthtest{#5 pt=0 pt} \AND \lengthtest{#6 pt=1 pt}}
{\draw[ultra thick, #1] (0,0) -- node[pos=#5, #1,n]{} node[pos=#6,n] {} node[pos=0.5, above, black]{#7} (#3,#4){};}
{\draw[ultra thick, #1] (#5*#3,#5*#4) -- node[pos=0, #1,n,overlay]{} node[pos=1, #1,n] {} node[pos=0.5, above, black]{#7} (#6*#3,#6*#4){};
\begin{pgfonlayer}{background}
\draw[ultra thick, #2] (0,0)--(#5*#3,#5*#4){};
\draw[ultra thick, #2] (#6*#3,#6*#4)--(#3,#4){};
\end{pgfonlayer}}
\end{tikzpicture}}

\newcommand{\northenasymb}{%
\mathbin{\begin{tikzpicture}[n/.style={circle,draw,minimum size=0.8mm,inner sep=0, fill=black}]
\node[n] at (0.125,0.05) {};
\node at (0.1,0.1){};
\draw[thick](0,0.05)--(0.25,0.05){};
\end{tikzpicture}}}

\newcommand{\northenbsymb}{%
\mathbin{\begin{tikzpicture}[n/.style={circle,draw,minimum size=0.8mm,inner sep=0, fill=black}]
\node[n] at (0,0.05) {};
\node[n] at (0.2,0.05) {};
\node at (0.1,0.1){};
\draw[thick](0,0.05)--(0.2,0.05){};
\end{tikzpicture}}}

\newcommand{\northena}[2]{{#1 \northenasymb #2}}
\newcommand{\northenb}[2]{{#1 \northenbsymb  #2}}

\usepackage{forest}

\newcommand{\dirtree}[1]{%
    \begin{forest}
      for tree={
        font=\ttfamily,
        grow'=0,
        child anchor=west,
        parent anchor=south,
        anchor=west,
        calign=first,
        edge path={
          \noexpand\path [draw, \forestoption{edge}]
          (!u.south west) +(4.5pt,0) |- node[fill,inner sep=1.25pt] {} (.child anchor)\forestoption{edge label};
        },
        before typesetting nodes={
          if n=1
            {insert before={[,phantom]}}
            {}
        },
        fit=band,
        before computing xy={l=15pt},
      }
    #1
    \end{forest}
}

\definecolor{cblue}{rgb}{0.3,0.3,1.0}
\definecolor{cred}{rgb}{1.0,0.1,0.1}

\definecolor{cwhite}{rgb}{0.95,0.95,0.97}
\definecolor{funbox}{rgb}{0.92,1,0.9}
\definecolor{resbox}{rgb}{1,0.74,0.75}
\definecolor{impbox}{rgb}{1,0.92,0.85}

\usetikzlibrary{shadows.blur}
\usetikzlibrary{shapes.symbols}

\definecolor{red1}{rgb}{0.30,0,0}
\definecolor{red2}{rgb}{0.60,0,0}
\definecolor{red3}{rgb}{0.90,0,0}
\definecolor{my-cyan}{cmyk}{1,0,0,0}
\definecolor{my-magenta}{cmyk}{0,1,0,0}
\definecolor{my-yellow}{cmyk}{0,0,1,0}

\tikzcdset{
    setcd/.style={
        every matrix/.append style={
            draw=setcolorbord,
            fill=setcolor,
            rounded corners,
            inner sep=1pt,
            #1
        },
    },
}

\tikzcdset{
    posetcd/.style={
        every matrix/.append style={
            draw=posetcolorbord,
            fill=posetcolor,
            rounded corners,
            inner sep=1pt,
            #1
        },
    },
}

\tikzcdset{
    catcd/.style={
        every matrix/.append style={
            draw=catcolor,
            fill=catcolor,
            rounded corners,
            inner sep=4pt,
            #1
        },
    },
}

\tikzcdset{
    graphcd/.style={
        every matrix/.append style={
            draw=graphcolorbord,
            fill=graphcolor,
            rounded corners,
            inner sep=4pt,
            #1
        },
    },
}

\definecolor{norcolor}{RGB}{213,87,96}

\definecolor{GCQuestionsColor}{RGB}{213,87,96} %
\definecolor{GCAnswersColor}{RGB}{46,117,189} %
\definecolor{GCExpl}{named}{orange}
\definecolor{GCBool}{named}{black}
\definecolor{transmuted}{named}{black}

\newcommand{\theonlyQ}{\mathcolor{GCQuestionsColor}{\singletonel}}
\newcommand{\theonlyA}{\mathcolor{GCAnswersColor}{\singletonel}}
\newcommand{\singletonQ}{\mathcolor{GCQuestionsColor}{\singleton}}
\newcommand{\singletonA}{\mathcolor{GCAnswersColor}{\singleton}}

\renewcommand{\opel}{}

\newcommand{\pos}{\styleobj{\mathbf{p}}}
\newcommand{\vel}{\styleobj{\mathbf{v}}}

\newcommand{\GCname}{GC}
\newcommand{\Gname}{G}

\newcommand{\GCatname}{GCat}
\newcommand{\GC}{\CatSymbol{\GCname}}
\newcommand{\GG}{\CatSymbol{\Gname}}
\newcommand{\GSet}{\ensuremath{\GG\Set}\xspace}
\newcommand{\GCat}{\CatSymbol{\GCatname}}
\newcommand{\GSetB}{\GG(\Set,\CatB)}

\newcommand{\prcat}{\CatSymbol{\mathcolor{morphisms}{P}\mathcolor{norcolor}{N}}}

\newcommand{\theonlynor}{\mathcolor{norcolor}{\singletonel}}
\newcommand{\NomS}{\mathcolor{norcolor}{\mathrm{N}}}
\newcommand{\HomS}{\mathcolor{morphisms}{\mathrm{H}}}
\newcommand{\NomSetS}[3]{\NomS^{#1}_{#2#3}}
\newcommand{\HomSetS}[3]{\HomS^{#1}_{{#2}{#3}}}

\renewcommand{\Nom}{\mathcolor{norcolor}{\mathrm{Nom}}}

\newcommand{\nmincompat}[2]{\mathcolor{GCExpl}{i}_{#1#2}}
\renewcommand{\enid}{{\gamma}}
\renewcommand{\enidof}[2]{\enid_{#1#2}}
\renewcommand{\enm}{{{\beta}}}
\renewcommand{\enmof}[4]{\enm_{#1#2#3#4}}
\renewcommand{\mtimescat}{\mathbin{{\pmb{\otimes}}}} %
\newcommand{\homobject}[3]{{\operatorname{\alpha}}_{#1{#2}{#3}}}
\newcommand{\homobjectset}[1]{{\operatorname{\alpha}}_{#1}}
\renewcommand{\funsp}{\EM{{\mathbf{F}}}} %
\renewcommand{\ressp}{\EM{{\mathbf{R}}}} %
\renewcommand{\adpa}{\stylemorph{{d}}} %
\renewcommand{\adpb}{\stylemorph{{e}}} %
\renewcommand{\F}[1]{{\mathcolor{black}{#1}}}
\renewcommand{\R}[1]{{\mathcolor{black}{#1}}}

\renewcommand{\interv}[2]{{[}#1,\smallspace#2{]}}

\definecolor{formulasetcolor}{named}{black}
\renewcommand{\manifold}[1]{\mathbb{M}#1}

\newcommand{\functhen}{\mathbin{{\mthensymbol}}\linebreak[0]} %
\renewcommand{\mthen}{\mathbin{{\mthensymbol}}\linebreak[0]} %

\renewcommand{\funcprod}{\mathbin{\mathcolor{black}{\times}}} %
\newcommand{\proofs}{\mathcolor{morphisms}{P}}
\newcommand{\refutations}{\mathcolor{norphisms}{R}}

\renewcommand{\profto}{\stylemorph{\mathrel{\slashedrightarrow}}\linebreak[0]} %
\renewcommand{\leftunitor}{{\operatorname{\emph{lu}}}} %
\renewcommand{\rightunitor}{{\operatorname{\emph{ru}}}} %
\renewcommand{\associator}{{\operatorname{\emph{as}}}} %

\newcommand{\gcQ}{\mathcolor{GCQuestionsColor}{Q}} %
\newcommand{\gcA}{\mathcolor{GCAnswersColor}{A}} %
\newcommand{\gcR}{\mathcolor{GCExpl}{C}} %
\newcommand{\gcC}{\mathcolor{GCExpl}{\kappa}} %
\newcommand{\gcq}{\mathcolor{GCQuestionsColor}{q}} %
\newcommand{\gca}{\mathcolor{GCAnswersColor}{a}} %

\newcommand{\gcE}{\mathcolor{GCBool}{\mathbf{B}}} %
\newcommand{\gcmaname}{r}
\newcommand{\gcmbname}{s}

\newcommand{\gcm}[1]{\mathbf{#1}}
\newcommand{\gcmFdec}[1]{{#1}^{\mathcolor{GCAnswersColor}{\sharp}}}
\newcommand{\gcmBdec}[1]{{#1}_{\mathcolor{GCQuestionsColor}{\flat}}}
\newcommand{\gcmF}[1]{{\mathcolor{GCAnswersColor}{#1}}^{\mathcolor{GCAnswersColor}{\sharp}}}
\newcommand{\gcmB}[1]{\mathcolor{GCQuestionsColor}{#1}_{\mathcolor{GCQuestionsColor}{\flat}}}
\newcommand{\gcmE}[1]{\mathcolor{GCExpl}{#1}^{\mathcolor{GCExpl}{\ast}}}
\newcommand{\gcma}{\gcm{\gcmaname}}
\newcommand{\gcmaf}{\gcmF{\gcmaname}}
\newcommand{\gcmab}{\gcmB{\gcmaname}}
\newcommand{\gcmae}{\gcmE{\gcmaname}}
\newcommand{\gcmb}{\gcm{\gcmbname}}
\newcommand{\gcmbf}{\gcmF{\gcmbname}}
\newcommand{\gcmbb}{\gcmB{\gcmbname}}
\newcommand{\gcmbe}{\gcmE{\gcmbname}}

\newcommand{\gctrue}{\mathcolor{GCExpl}{\true}}
\newcommand{\gcfalse}{\mathcolor{GCExpl}{\false}}
\newcommand{\Nac}{{\NomSetS{}{\Obja}{\Objc}}}
\newcommand{\Nab}{{\NomSetS{}{\Obja}{\Objb}}}
\newcommand{\Nbc}{{\NomSetS{}{\Objb}{\Objc}}}
\newcommand{\Hac}{{\HomSetS{}{\Obja}{\Objc}}}
\newcommand{\Hab}{{\HomSetS{}{\Obja}{\Objb}}}
\newcommand{\Hbc}{{\HomSetS{}{\Objb}{\Objc}}}

\newcommand{\gcParName}{\llpar}
\newcommand{\gcPlusName}{\sqcup}
\newcommand{\gcPlus}{\mathbin{\boldsymbol{\gcPlusName}}}
\newcommand{\gcPlusob}{\mathbin{\obcomp{\gcPlus}}}
\newcommand{\gcPlusmor}{\mathbin{\morcomp{\gcPlus}}}
\newcommand{\gcCtimes}{\mathbin{\mathcolor{GCExpl}{\otimes}}}
\newcommand{\gcCplus}{\mathbin{\mathcolor{GCExpl}{\gcPlusName}}}
\newcommand{\gcCpar}{\mathbin{\mathcolor{GCExpl}{\gcParName}}}

\newcommand{\gcMehName}{\ast}
\newcommand{\gcMeh}{\mathbin{\boldsymbol{\gcMehName}}}
\newcommand{\gcCmeh}{\mathbin{\mathcolor{GCExpl}{\gcMehName}}}
\newcommand{\gcMehob}{\mathbin{\obcomp{\gcMehName}}}
\newcommand{\gcMehmor}{\mathbin{\morcomp{\gcMehName}}}

\def\llpar{\mathbin{\rotatebox[origin=c]{180}{\&}}}

\def\lltimes{\otimes}%

\def\llwith{\&}%

\def\llplus{\oplus}%

\def\llzero{\boldsymbol{0}}%
\def\llone{\boldsymbol{1}}%
\def\llbot{\boldsymbol{\bot}}%
\def\lltop{\boldsymbol{\top}}%

\newcommand{\theprod}{{\color{black}(\gcma \lltimesmor \gcmb)}}
\newcommand{\thesum}{{\color{black}(\gcma \gcPlusmor \gcmb)}}
\newcommand{\thepar}{{\color{black}(\gcma \llparmor \gcmb)}}
\newcommand{\theseries}{{\color{black}(\gcma \mthen \gcmb)}}
\newcommand{\themeh}{{\color{black}(\gcma \gcMehmor \gcmb)}}

\newcommand{\obcomp}[1]{{#1}}
\newcommand{\morcomp}[1]{{#1}}
\newcommand{\cprodob}[1]{\mathbin{{\times}_{#1}}}
\newcommand{\cprodmor}[1]{\mathbin{{\times}_{#1}}}
\newcommand{\csumob}[1]{\mathbin{{+}_{#1}}}
\newcommand{\csummor}[1]{\mathbin{{+}_{#1}}}
\newcommand{\llparob}{\mathbin{\obcomp{\llpar}}}
\newcommand{\llparmor}{\mathbin{\morcomp{\llpar}}}

\newcommand{\lltimesob}{\mathbin{\obcomp{\lltimes}}}
\newcommand{\lltimesmor}{\mathbin{\morcomp{\lltimes}}}

\renewcommand{\mtoin}[1]{\mathrel{{\to}_{#1}}\linebreak[0]}
\renewcommand{\mfromin}[1]{\mathrel{{\leftarrow}_{#1}}} %

\newcommand{\initobof}[1]{0_{#1}}
\newcommand{\termobof}[1]{1_{#1}}

\renewcommand{\cartprod}{\mathbin{\times}}
\renewcommand{\funcprod}{\mathbin{\morcomp{\times}}}
\newcommand{\thefloor}[1]{\aword{floor}(#1)}
\newcommand{\theround}[1]{\aword{round}(#1)}
\newcommand{\theceil}[1]{\aword{ceil}(#1)}

\newcommand{\prf}{\expandafter\prftree[r]{\raisebox{3pt}{}}}
\newcommand{\prfdouble}{\expandafter\prftree[double]}
\newcommand{\prfcomma}{\expandafter\prftree[r]{\raisebox{3pt}{,}}}
\newcommand{\prfperiod}{\expandafter\prftree[r]{\raisebox{3pt}{.}}}
\newcommand{\prfsemi}{\expandafter\prftree[r]{\raisebox{3pt}{;}}}
\newcommand{\prfdoublecomma}{\expandafter\prftree[double][r]{\raisebox{3pt}{,}}}
\newcommand{\prfdoubleperiod}{\expandafter\prftree[double][r]{\raisebox{3pt}{.}}}

\makeatletter
\def\mathcolor#1#{\@mathcolor{#1}}
\def\@mathcolor#1#2#3{%
  \protect\leavevmode
  \begingroup
    \color#1{#2}#3%
  \endgroup
}
\makeatother

\newcommand{\gear}[5]{%
\begin{tikzpicture}
\draw[ultra thick]
\foreach \i in {1,...,#1} {%
  [rotate=(\i-1)*360/#1]  (0:#2)  arc (0:#4:#2) {[rounded corners=1.5pt]
             -- (#4+#5:#3)  arc (#4+#5:360/#1-#5:#3)} --  (360/#1:#2)
};\end{tikzpicture}}

\newsavebox{\chaptergear}
\savebox{\chaptergear}{%
  \raisebox{-1pt}{\maxsizebox{2.8mm}{!}{\gear{12}{0.5}{0.4}{10}{2}}}\!\!
}

\DeclareMathSymbol{\shortminus}{\mathbin}{AMSa}{"39}
\newcommand{\minusone}{{{\shortminus}}\kern1pt1}

\makeatletter
\DeclareRobustCommand{\shortto}{%
    \mathrel{\mathpalette\short@to\relax}%
}

\newcommand{\short@to}[2]{%
    \mkern2mu
    \clipbox{{.4\width} 0 0 0}{$\m@th#1\vphantom{+}{\shortrightarrow}$}%
}
\makeatother

\makeatletter

\newcommand{\opensetbracket}{{\ColorIfNowBlack{formulasetcolor}{\mathbf{\{}}}}
\newcommand{\closesetbracket}{{\ColorIfNowBlack{formulasetcolor}{\mathbf{\}}}}}

\newcommand{\cleft}[2][.]{%
    \begingroup\colorlet{savedleftcolor}{.}%
    \color{#1}\mathopen{}\left#2\color{savedleftcolor}%
}
\newcommand{\cright}[2][.]{%
    \color{#1}\right#2\mathclose{}\endgroup%
}

\DeclareRobustCommand{\@makelists}[4]{%
    #1%
    \foreach \entry [count=\xi]  in {#4}{
        \ifnum\xi = 1\else#2\linebreak[0]\fi%
        {{\entry}}%
    }%
    #3%
}
\newcommand{\smallspace}{\mspace{4mu}}
\newcommand{\commaspace}{{,\smallspace}}
\DeclareRobustCommand{\makeset}[1]{%
    \@makelists{\opensetbracket}{\commaspace}{\closesetbracket}{#1}%
}
\DeclareRobustCommand{\makesetnocolor}[1]{%
    \@makelists{\{}{\commaspace}{\}}{#1}%
}
\DeclareRobustCommand{\tupp}[1]{%
    \@makelists{\langle}{\commaspace}{\rangle}{#1}%
}

\DeclareRobustCommand{\tup}[1]{%
        \@makelists{\mathopen{}\left\langle}{\commaspace}{\right\rangle\mathclose{}}{#1}%
}

\DeclareRobustCommand{\makecprod}[1]{%
    \@makelists{}{\cprod}{}{#1}%
}

\DeclareRobustCommand{\makecartprod}[1]{%
    \@makelists{}{\cartprod}{}{#1}%
}

\DeclareRobustCommand{\maketupconcat}[1]{%
    \@makelists{}{\tupconcat}{}{#1}%
}

\DeclareRobustCommand{\makesetBig}[1]{%
    \@makelists{{\mathopen{}\color{formulasetcolor}\mathbf{\Big\{}}}{\commaspace}{{\color{formulasetcolor}\mathbf{\Big\}}\mathclose{}}}{#1}%
}

\DeclareRobustCommand{\makesett}[1]{%
    \@makelists{\cleft[formulasetcolor]{\{}}{\commaspace}{\cright[formulasetcolor]{\}}}{#1}%
}

\DeclareRobustCommand{\makelist}[1]{%
    \@makelists{[}{\commaspace}{]}{#1}%
}

\DeclareRobustCommand{\cObj}[1]{%
    \@makelists{\fatstart}{\commaspace}{\fatend}{#1}%
}

\makeatother

\DeclarePairedDelimiter\mypars{\lparen}{\rparen}
\newcommand{\pars}[1]{\mypars*{#1}}

\edef\BlackColor{\csname\string\color@.\endcsname}

\makeatletter
\newcommand{\@CurrentColor}{}%
\DeclareRobustCommand{\ColorIfNowBlack}[2]{%
    \edef\@CurrentColor{\csname\string\color@.\endcsname}%
    \IfStrEq{\@CurrentColor}{\BlackColor}{%
        {\color{#1}#2}%
    }{%
        #2%
    }%
}
\makeatother

\newenvironment{forslides}{%
    \colorlet{shadecolor}{Pink!50}%
    \begin{shaded*}%
        \textbf{This content was created for slides. Do not erase or change the names}.\\%
        }{%
    \end{shaded*}}

\crefformat{equation}{(#2#1#3)}
\makeatletter
\def\mathcolor#1#{\@mathcolor{#1}}
\def\@mathcolor#1#2#3{%
  \protect\leavevmode
  \begingroup
    \color#1{#2}#3%
  \endgroup
}
\makeatother

\presetkeys{todonotes}{disable}{}

\newcommand{\proceedings}[2]{\ifthenelse{\boolean{proceedings}}{#1}{#2}}

\newboolean{proceedings}
\setboolean{proceedings}{true}

\definecolor{transmuted}{RGB}{0,0,0}
\setboolean{statuscolors}{false}
\setboolean{debugimages}{false}
\setboolean{showslides}{false}
\setboolean{showproofs}{false}
\setboolean{cachepdf}{false}
\setboolean{devel}{false}
\setboolean{instructors}{false}
\newboolean{prooftext}
\setboolean{prooftext}{false}

\usepackage{pdfpages}
\usepackage{breakurl}             %
\usepackage{underscore}           %
\usepackage{xstring}

\theoremstyle{definition}

\newlength{\mysep}
\setlength{\mysep}{2pt}

\newtheorem{theorem}{Theorem}
\newtheorem*{theorem*}{Theorem}

\newtheorem{proposition}[theorem]{Proposition}
\newtheorem*{proposition*}{Proposition}

\newtheorem*{lemma*}{Lemma}

\newtheorem{remark}[theorem]{Remark}

\newtheorem{example}[theorem]{Example}

\newtheorem{definition}[theorem]{Definition}
\newtheorem*{definition*}{Definition}

\newcommand\yqed[1]{%
    \leavevmode\unskip\penalty9999 \hbox{}\nobreak\hfill
    \quad\hbox{#1}}
    \newcommand\definitionend{\yqed{$\vartriangleleft$}}

\AtBeginDocument{
        \addtolength{\abovedisplayskip}{-0.4ex}
        \addtolength{\abovedisplayshortskip}{-0.2ex}
        \addtolength{\belowdisplayskip}{-0.4ex}
        \addtolength{\belowdisplayshortskip}{-0.2ex}
}

\title{Categorification of Negative Information using Enrichment}

\author{Andrea Censi \quad Emilio Frazzoli \quad Jonathan Lorand \quad Gioele Zardini
\institute{Institute for Dynamic Systems and Control\\
Department of Mechanical and Process Engineering\\
ETH Zurich, Switzerland}
\email{\{acensi,efrazzoli,jlorand,gzardini\}@ethz.ch}
}

\begin{document}
    \maketitle

    \begin{abstract}
        In many engineering applications it is useful to reason about ``negative information''.
        For example, in planning problems, providing an optimal solution is the same as giving a feasible solution (the ``positive'' information) together with a proof of the fact that there cannot be feasible solutions better than the one given (the ``negative'' information).
        We model negative information by introducing the concept of ``norphisms'', as opposed to the positive information of morphisms.
        A ``nategory'' is a category that has ``nom''-sets in addition to hom-sets, and specifies the interaction between norphisms and morphisms.
        In particular, we have composition rules of the form $\text{morphism} + \text{norphism} \to \text{norphism}$.
        Norphisms do not compose by themselves; rather, they use morphisms as catalysts.
        After providing several applied examples, we connect nategories to enriched categtory theory.
        Specifically, we prove that categories enriched in de Paiva's dialectica categories $\GC$, in the case $\CatC = \Set$ and equipped with a modified monoidal product, define nategories which satisfy additional regularity properties.
        This formalizes negative information categorically in a way that makes negative and positive morphisms equal citizens.
    \end{abstract}

    \section{Introduction}

\subsection{Manipulation of negative information is important in applications of category theory}

Our research group's background is in robotics and systems theory.
In these fields, we have found that category theory can describe well many of the structures in our problems, but something is often missing: we find ourselves in the position of reasoning and writing algorithms that manipulate ``negative information'', but we do not know what is an appropriate categorical concept for it.
We give some examples.

\textbf{Robot motion planning} can be formalized as the problem of finding a trajectory through an environment, respecting some constraint (e.g., avoiding obstacles).
One can think of the robot configuration manifold $\manifold$ as a category where the objects are elements of the tangent bundle and the morphisms are the feasible paths according to the problem constraints.
The output of planning problems has an intuitive representation in category theory, if the problem is feasible.
A \emph{path} planning algorithm is given two objects and must compute a \emph{morphism} as a solution.
A \emph{motion} planning algorithm would compute a trajectory, which could be seen as a \emph{functor} $F$ from the manifold $[0,T]$ to $M$ with $F(0)=A$ and $F(T)=B$.
However, if the problem is infeasible--if no morphisms between two points can be found---if the algorithm must present a \emph{certificate of infeasibility}--what is the equivalent concept in category theory?

In many cases, the problems are not binary (either a solution exists or not, either a proposition is true or not) but we care about the performance of solutions.
For example, consider the case of the \emph{weighted shortest path problem in dynamic programming}.
The problem is to find a path through a graph that minimizes the sum of the weights of the edges on the path.
In robotics, this can be used for planning problems, where the weights could represent the time, the distance, or the energy required by a robot to traverse an edge, and the nodes are either regions of space or, more generally, joint states of the world and environment.
Proving that a path is optimal means producing the path \emph{together with} a proof that there are no shorter paths.
This is called a ``certificate of optimality'' and, like certificates of infeasibility, is negative information as it consists in negating the existence of a certain class of paths.
Interestingly, one can see algorithms such as Dijkstra's algorithm as constructing both positive and negative information at the same time, such that when a path is finally found, we are sure that there are no shorter ones \cite{delling2009engineering}.

In some cases, the negative information is a first-class citizen which is critical to the efficiency.
Algorithms such as A* require the definition of \emph{heuristic} functions, which is negative information: they provide a \emph{lower bound} on the cost of a path between two points.
And better heuristics make the algorithm faster.
Again, we ask, what could be the categorical counterpart of heuristics?

In \emph{co-design} \cite{fong2018seven, censi2022}, a morphism $\funsp \mto \ressp$ describes what functionality can be achieved with which resources.
They are characterized as boolean profunctors, that is, monotone functions $\funsp\op \cartprod \ressp \to \Bool$.
The negative information would be a ``nesign'' problem that characterizes an impossibility.
For example, if $\funsp=\ressp=\text{Energy}$, we expect that in this universe we cannot find a realizable morphism $\adp$ that satisfies $\adp(2 J, 1 J)$ (obtaining 2 Joules from 1 Joule).
Can this be expressed as some sort of morphism? In which category does it live?

\subsection{Our approach: ``Categorification'' of negative information}

We briefly describe our thought process in finding a formalization for dealing with negative information.

One approach could have been to build structure on top of a category, at a higher level, using logic.
We eschew this approach because of the belief that we should find a duality between positive and negative information that puts them ``at the same level''.

Our approach has been one in the spirit of ``categorification'': representing the negative information with a concrete structure for which to find axioms and inference rules.

An early influence in our thinking was the paper of Shulman about ``proofs and refutations''~\cite{shulman2018affine}.
What follows is a simplified explanation of one of the concepts of the paper.
Consider a category where objects are propositions and morphisms $\Obja \to \Objb$ are propositions $\Obja\Rightarrow \Objb$
(with the particular case of $\Obja \simeq (\top \to \Obja$)).
We can then consider the type $\proofs(\Obja\to \Objb)$ of \emph{proofs} and the type $\refutations(\Obja\to \Objb)$ of \emph{refutations}, which correspond to \emph{positive} and \emph{negative} information.
According to intuitionist logic, $\proofs(\Obja\to \Objb) = (\proofs(\Obja)\to \proofs(\Objb)) \times (\refutations(\Objb)\to \refutations(\Obja))$: a proof of $\Obja\Rightarrow \Objb$ is a way to convert a proof of $\Obja$ into a proof of $\Objb$ together with a way to convert a refutation of $\Objb$ into a refutation of $\Obja$.

In that paper, proofs and refutations, positive and negative information, are treated \emph{at the same level} but not symmetrically---proof and refutations have different semantics, and $\proofs$ and $\refutations$ map products and coproducts ($\vee$, $\wedge$) to different linear logic operators.
This led to the idea that negative information should be at the same level of positive information: if positive information is represented by morphisms, then also the negative information should be described as ``negative arrows'' between objects, which we called \emph{norphisms} (for negative morphisms).

We also realized that the positive/negative information duality we are looking for is richer than the structure of proofs/refutations in logic.
In (classical/intuitionistic) logic, one expects the existence of either a proof of a proposition $A$, a refutation of $A$, or neither, but not both.
Instead, in our formalization, norphisms are a more general notion, which can coexist with morphisms and give complementary information, as in the planning examples in the introduction.

An initial idea was to consider for each category a ``twin'' category, whose morphisms would be the norphisms we were looking for to represent the negative information; however, this idea failed.
In the course of the paper, it will be clear that positive/negative information cannot be decoupled, because negative information cannot be composed independently of positive information. In the end, we unite them by viewing them as part of a single enrichement structure. 

\subsection{Plan of the paper}
This paper follows an {inductive exposition} and is divided in two parts.

In the \textbf{first part} we provide the \textbf{motivation and several examples of representing negative information with ``norphism'' structure}.
In Section \ref{sec:thin-cat-case} we consider the case of a thin category.
In this simple setting we can already see that norphisms compose differently than morphisms, and that we need two composition rules for them.
In Section \ref{sec:nategories} we state our main definition, that of a ``nategory'', and in Section \ref{sec:canonical-constructions} we show some canonical ways to build a nategory out of a category. 
In Sections \ref{sec:berg} and \ref{sec:dp} we discuss two examples, \Berg and \DP, which have norphism structures in which norphisms and morphisms are not mutually exclusive.

In the \textbf{second part} our goal is to provide \textbf{an elegant way to think of norphisms and their composition by using enriched category theory}.
By doing so, we show that the additional structure of norphisms and their composition, rules which might initially appear ``funky'', is not an arbitrary structure, but rather it is as ``natural'' as the positive information of morphisms. In Section \ref{sec:GSet} we introduce the dialectica category $\GSet$ \cite{de1989dialectica, blass1995questions} and define a monoidal product for it which is slightly different than the ones usually used as linear logic connectives. Then, in  Section \ref{sec:payoff}, we prove that $\GSet$-enriched categories encode nategories which satisfy some additional compatibilities between morphisms and norphisms. These additional compatibilities are not satisfied in certain examples of interest to us, therefore we have refrained from including them directly in our definition of nategory.

    \section{Building intuition: the case of thin categories}
\label{sec:thin-cat-case}
To build an intuition about norphisms, we look at the case of ``thin'' categories, in which each hom-set contains at most one morphism.
Thin categories are essentially pre-orders.
To aid the interpretation, one can think of a pre-order as defining a reachability relation, in which a morphism $\Obja \mto \Objb$ represents ``I can reach $\Objb$ from $\Obja$''.
Or, we can think of morphisms as (proof-irrelevant) implications: $\Obja \mto \Objb$ represents ``I can prove $\Objb$ from $\Obja$''.
In a thin category, negative information is limited to indicate the refutation of positive information.
Therefore, a norphism $\nora \colon \Obja \norto \Objb$ is equivalent to ``There are no morphisms from $\Obja$ to $\Objb$''.
Particularly, this means ``I cannot reach $\Objb$ from $\Obja$'' or ``I cannot prove $\Objb$ from $\Obja$''.

We will later see that, in general, norphisms need not necessarily be mutually exclusive with morphisms. Still, this example is sufficient to get us started in appreciating how morphisms and norphisms compose differently.
The composition rule for morphisms reads:
\begin{equation}
    \label{eq:composition-mor}
    \prfperiod{\mora\colon \Obja \mto \Objb}{\morb\colon \Objb \mto \Objc}
    {(\mora \mthen \morb)\colon \Obja \mto \Objc}
\end{equation}
Mimicking this, one could start with two norphisms $\nora\colon \Obja \norto \Objb$ and $\norb\colon \Objb \norto \Objc$ and expect to be able to say something about a norphism $\Obja \norto \Objc$, with a composition rule of the form:
\begin{equation}\label{eq:composition-nor-2}
    \prfperiod{\nora\colon \Obja \norto \Objb}{\norb\colon \Objb \norto \Objc}
    {\textcolor{norcolor}{???} \colon \Obja \norto \Objc}
\end{equation}
However, norphisms do not compose this way.
In fact, one can derive the following rule:
\begin{equation}
    \label{eq:nor-composition}
    \prfperiod{\norc \colon \Obja \norto \Objc}{\quad}{\Objb \colon \Ob_\CatC}{(\nora\colon \Obja \norto \Objb) \boolor (\norb\colon \Objb \norto \Objc)}
\end{equation}
This rule is ``the dual'' of (\ref{eq:composition-mor}) in the same sense as these two axioms are dual:
\begin{equation}\label{eq:nor-duality}
    \prfcomma{\true}{\Obja \mto \Obja} \qquad \prfcomma{\Obja \norto \Obja}{\false}
\end{equation}
that is, in the sense of flipping vertically and negating the propositions.

We read (\ref{eq:nor-composition}) as saying that if there is no morphism $\Obja \mto \Objc$, it is because, for every possible intermediate $\Objb$, there cannot be a morphism $\Obja \mto \Objb$ or $\Objb\mto \Objc$.
Note that composition goes in the ``opposite'' direction meaning that from one norphism, we get some information about the existence of one or two in a pair.
The composition (\ref{eq:nor-composition}) is not constructive: from the ``$\boolor$'', we do not know which side we can create.
Indeed, this composition highlights the asymmetry between morphisms and norphisms:
morphisms compose constructively by themselves (i.e., without taking into account norphisms);
norphisms, instead, do not ``compose'', but rather ``decompose'' by themselves.
To construct norphisms, we need to start from a norphism \emph{and} a morphism that acts as a ``catalyst''.
When interpreting a thin category as a graph, if there is a norphism $\nora \colon \Obja \norto \Objb$, it means that for any $\Objb$, the path $\Obja \mto \Objb \mto \Objc$ must be interrupted in one part or the other, because otherwise we would have a contradiction.
Indeed, if we know that morphisms $\mora\colon \Obja \mto \Objb$ and $\morb\colon \Objb \mto \Objc$ exist, then their composition $\mora\mthen \morb\colon \Obja \mto \Objc$ must exist, and therefore no norphism $\nora\colon \Obja \norto \Objc$ can exist.
This observation can be turned around in a constructive way.
Starting from a morphism $\mora\colon \Obja \mto \Objb$ and a norphism $\nora\colon \Obja \norto \Objc$ (i.e., morphisms and norphisms with the same source), we can infer a norphism $\northenb{\mora}{\nora}\colon \Objb\norto \Objc$ (i.e., there cannot be a morphism $\Objb\mto \Objc$):
\todo[inline]{Check norphism symbol, as in the pic it has to be done differently.
}
\begin{equation}
    \label{eq:rule-same-source}
    \begin{tabular}{cc}
        \includesag{nor-compose-const-a}\qquad \qquad &
        \begin{tikzpicture}
            \node at (0,2){\prfperiod{\Objb \overset{\mora}{\stylemorph{\leftarrow}} \Obja \overset{\nora}{\norto}\Objc}{\Objb \overset{\northenb{\mora}{\nora}}{\norto} \Objc}
            };
        \end{tikzpicture}
    \end{tabular}
\end{equation}

Symmetrically, starting from a morphism $\morb\colon \Objb \mto \Objc$ and a norphism $\nora\colon \Obja \norto \Objc$ (i.e., morphisms and norphisms with the same target), we can infer a norphism $\northena{\nora}{\mora} \colon \Obja\norto \Objb$:
\todo[inline]{check symbol in pic if changing macro}
\begin{equation}
    \label{eq:rule-same-target}
    \begin{tabular}{cc}
        \includesag{nor-compose-const-b}\qquad \qquad &
        \begin{tikzpicture}
            \node at(0,2){\prfperiod{\Obja \overset{\nora}{\norto} \Objc \overset{\morb}{\stylemorph{\leftarrow}}\Objb}{\Obja \overset{\northena{\nora}{\morb}}{\norto} \Objb}};
        \end{tikzpicture}
    \end{tabular}
\end{equation}
\showslides{
    \begin{forslides}
        \begin{equation}\label{eq:norcomp1}
            \prf{
                \Objb \overset{\mora}{\stylemorph{\leftarrow}} \Obja \overset{\nora}{\norto}\Objc
            }{
                \Objb \overset{\mora \,\northenbsymb\, \nora}{\norto} \Objc
            }
        \end{equation}

        \begin{equation}\label{eq:norcomp2}
            \prf{
                \Obja \overset{\nora}{\norto} \Objc \overset{\morb}{\stylemorph{\leftarrow}}\Objb
            }{
                \Obja \overset{\nora \,\northenasymb\, \morb}{\norto} \Objb
            }
        \end{equation}
    \end{forslides}
}

Note that the new norphism is pointing in the ``same direction'' as the starting one, meaning that either source or target are preserved.

\section{Describing negative information: \emph{nategories}}
\label{sec:nategories}

In this section we make the notion of norphisms more precise, by defining the additional structure which a category must have in order to encode negative information.

\begin{definition}[Nategory]\label{def:nategory}
    A locally small \emph{nategory} \CatC is a locally small category with the following additional structure.
    For each pair of objects $\Obja,\Objb \setin \Ob_{\CatC}$, in addition to the set of morphisms $\HomSet{\CatC}{\Obja}{\Objb}$, we also specify:
    \begin{compactitem}
        \item A set of norphisms $\NomSet{\CatC}{\Obja}{\Objb}$.
        \item An \emph{incompatibility relation}, which we write as a binary function
              \begin{equation} \label{eq:010-nategory-incompat}
                  \nmincompat{\Obja}{\Objb}\colon\NomSet{\CatC}{\Obja}{\Objb} \cartprod \HomSet{\CatC}{\Obja}{\Objb}   \to \makeset{\false, \true}.
              \end{equation}
    \end{compactitem}
    For all triples $\Obja,\Objb,\Objc$, in addition
    to the morphism composition function
    \begin{equation} \label{eq:011-nategory-comp}
        \mthen_{\Obja\Objb\Objc} \colon \HomSet{\CatC}{\Obja}{\Objb} \cartprod \HomSet{\CatC}{\Objb}{\Objc} \to \HomSet{\CatC}{\Obja}{\Objc},
    \end{equation}
    we require the existence of two norphism composition functions
    \begin{equation}\label{eq:012-nategory-norcomp}
        \begin{aligned}
            \northenbsymb_{\Obja\Objb\Objc} & \colon \HomSet{\CatC}{\Obja}{\Objb} \cartprod \NomSet{\CatC}{\Obja}{\Objc} \to \NomSet{\CatC}{\Objb}{\Objc},
            \\
            \northenasymb_{\Obja\Objb\Objc} & \colon \NomSet{\CatC}{\Obja}{\Objc} \cartprod \HomSet{\CatC}{\Objb}{\Objc} \to \NomSet{\CatC}{\Obja}{\Objb},
        \end{aligned}
    \end{equation}
    and we ask that they satisfy two ``equivariance'' conditions:
    \begin{align}
        \nmincompat{\Objb}{\Objc}(\northenb{\mora}{\nora}, \morb)
         & \Rightarrow
        \nmincompat{\Obja}{\Objc} ( \nora, \morab )\label{eq:cond1}\tag{equiv-1}, \\
        \nmincompat{\Obja}{\Objb} (\northena{\nora}{\morb}, \mora)
         & \Rightarrow
        \nmincompat{\Obja}{\Objc} ( \nora, \morab )\tag{equiv-2}\label{eq:cond2}.
    \end{align}
\end{definition}
We write $\nora : \Obja \norto \Objb$ to say that $\nora \in \NomSet{\CatC}{\Obja}{\Objb}$.
\begin{definition}[Exact nategory]\label{def:nategory-exact}
    If the two conditions (\ref{eq:cond1}) and (\ref{eq:cond2}) are satisfied with ``$\Leftrightarrow$'' instead of just ``$\Rightarrow$'', we say that the nategory is \emph{exact}.
\end{definition}

Condition (\ref{eq:cond1}) says that the norphism $\northenb{\mora}{\nora}$ can exclude the morphism $\morb$ only if $\morab$ is excluded by $\nora$.
The idea is that such a $\morb$ should not be excluded for any ``additional reasons'', but only on the grounds that $\morab$ is excluded by $\nora$.

\begin{figure}[tb]
    \begin{center}
        \begin{subfigure}[b]{0.48\linewidth}
            \includesag{syst-nor2-a}
            \caption{Starting situation.}
            \label{fig:syst-nor2-a}
        \end{subfigure}
        \begin{subfigure}[b]{0.48\linewidth}
            \includesag{syst-nor2-b}
            \caption{Find incompatibility with $\nora$.}
            \label{fig:syst-nor2-b}
        \end{subfigure}
        \begin{subfigure}[b]{0.48\linewidth}
            \includesag{syst-nor2-c}
            \caption{Pulling back incompatible morphisms.}
            \label{fig:syst-nor2-c}
        \end{subfigure}
        \begin{subfigure}[b]{0.48\linewidth}
            \includesag{syst-nor2-d}
            \caption{$\nomtohom{\Objb}{\Objc}(\northenb{\mora}{\nora})
    \subseteq
    \precomp{\mora}^{-1}(\nomtohom{\Obja}{\Objc}(\nora))$.}
            \label{fig:syst-nor2-d}
        \end{subfigure}
    \end{center}
    \vspace{-0.5cm}
    \caption{
        \label{fig:syst-nor2}
    }
\end{figure}

We draw some figures to develop further intuition (Fig. \ref{fig:syst-nor2}).
Let $\nomtohom{\Obja}{\Objb}$ denote the function which maps a norphism to the set of morphisms with which it is incompatible:
\begin{equation}
    \label{eq:nom-to-hom}
    \defmapperiod{
        \nomtohom{\Obja}{\Objb}
    }{
        \NomSet{\CatC}{\Obja}{\Objb}
    }{
        \to
    }{
        \powerset(\HomSet{\CatC}{\Obja}{\Objb})
    }{
        \nora
    }{
        \makeset{
            \mora \setin \HomSet{\CatC}{\Obja}{\Objb}\colon \nmincompat{\Obja}{\Objb}(\nora, \mora)
        }
    }
\end{equation}
We start in~Fig. \ref{fig:syst-nor2-a} with a norphism $\nora: \Obja \norto \Objc$ and
a morphism $\mora: \Obja \mto \Objb$.
In Fig. \ref{fig:syst-nor2-b} we apply $\nomtohom{\Obja}{\Objc}$ to find the set of incompatible morphisms $\nomtohom{\Obja}{\Objc}(\nora)$.
In Fig. \ref{fig:syst-nor2-c} we use the precomposition map
\begin{equation}\label{eq:020-precomp-map}
    \defmapcomma{
        \precomp{\mora}
    }{
        \HomSet{\CatC}{\Objb}{\Objc}
    }{
        \to
    }{
        \HomSet{\CatC}{\Obja}{\Objc}
    }{
        \morb
    }{
        \mora \mthen \morb
    }
\end{equation}
to obtain the set of morphisms
\begin{equation}
    \precomp{\mora}^{-1}(\nomtohom{\Obja}{\Objc}(\nora)).
\end{equation}
These are to be prohibited because when pre-composed with $\mora$
they give a morphism that is forbidden by $\nora$.
Now, in principle, it could be that our norphism inference is so powerful that
$\northenb{\mora}{\nora}$ manages to exclude all of these:
\begin{equation}
    \nomtohom{\Objb}{\Objc}(\northenb{\mora}{\nora}) = \precomp{\mora}^{-1}(\nomtohom{\Obja}{\Objc}(\nora)).
\end{equation}
In general, we are happy with the composition operation if it excludes part of those (but not more):
\begin{equation}\label{eq:inclusion-condition1}
    \nomtohom{\Objb}{\Objc}(\northenb{\mora}{\nora})
    \subseteq
    \precomp{\mora}^{-1}(\nomtohom{\Obja}{\Objc}(\nora)).
\end{equation}
It can readily be checked that (\ref{eq:inclusion-condition1}) is equivalent to (\ref{eq:cond1}).
Similarly, (\ref{eq:cond2}) is equivalent to requiring
\begin{equation}\label{eq:inclusion-condition2}
    \nomtohom{\Objb}{\Objc}(\northena{\nora}{\morb})
    \subseteq
    \postcomp{\morb}^{-1}(\nomtohom{\Obja}{\Objc}(\nora)),
\end{equation}
where $\postcomp{\morb}$ is the map ``post-composition with $\morb$''.

\proceedings{}{
    It is an inclusion of two sets $S_1 \subseteq S_2$.
    Call the generic element of the two sets $\morb$.
    Then inclusion means that the indicator function of the first implies the indicator function of the second: $S_1(\morb) \Rightarrow S_2(\morb)$.
    The indicator function of the first set is:
    \begin{equation}
        S_1(\morb) = \morb \in \nomtohom{\Objb}{\Objc}(\northenb{\mora}{\nora})
        = \nmincompat{\Objb}{\Objc}(\northenb{\mora}{\nora}, \morb).
    \end{equation}
    For the second set it is:
    \begin{equation}
        S_2(\morb) = \morb \in \precomp{\mora}^{-1}(\nomtohom{\Obja}{\Objc}(\nora))
        = \nmincompat{\Obja}{\Objc} ( \nora, \morab ).
    \end{equation}
}

\showslides{
    \begin{forslides}
        \begin{align}\label{eq:cond12bis}
            \nmincompat{\Objb}{\Objc}(\northenb{\mora}{\nora}, \morb)
             & \Rightarrow
            \nmincompat{\Obja}{\Objc} ( \nora, \morab ), \\
            \nmincompat{\Obja}{\Objb} (\nora \northenasymb \morb, \mora)
             & \Rightarrow
            \nmincompat{\Obja}{\Objc} ( \nora, \morab ).
        \end{align}
        \begin{align}\label{eq:cond12ter}
            \nmincompat{\Objb}{\Objc}(\northenb{\mora}{\nora}, \morb)
             & \Leftrightarrow
            \nmincompat{\Obja}{\Objc} ( \nora, \morab ), \\
            \nmincompat{\Obja}{\Objb} (\nora \northenasymb \morb, \mora)
             & \Leftrightarrow
            \nmincompat{\Obja}{\Objc} ( \nora, \morab ).
        \end{align}
    \end{forslides}
}
    \ifthenelse{\boolean{proceedings}}{}{
        \clearpage}
    \section{Canonical nategory constructions}\label{sec:canonical-constructions}

Here are three canonical constructions that allow us to get a nategory out of a category in a more or less straightforward way:
\begin{compactenum}
    \item Setting the norphism sets to be empty (Example \ref{exa:nom-empty});
    \item Setting the norphism sets to be singletons that negate the entire respective hom-sets (Example \ref{exa:nom-all});
    \item Setting the norphism sets to be the powerset of the respective hom-sets (Example \ref{exa:nor-subsets}).
\end{compactenum}

\begin{example}[A nategory with no norphisms]
    \label{exa:nom-empty}
    For any category~\CatC, let
    \begin{equation}\label{eq:010-nonorph}
        \NomSet{\CatC}{\Obja}{\Objb}\definedas\emptyset.
    \end{equation}
    For all pairs $\Obja,\Objb$ the
    function $\nmincompat{\Obja}{\Objb}$ is uniquely defined as it has an empty domain.
    The functions $\northenbsymb, \northenasymb$ also have empty domains.
    The conditions (\ref{eq:cond1}) and (\ref{eq:cond2}) are trivially verified.
    A nategory with no norphisms is just a category.
\end{example}

\begin{example}[Singleton norphism sets negating all morphisms]
    \label{exa:nom-all}
    In this construction, we turn a category into a nategory by making the choice that a norphism is a witness for the fact that the corresponding hom-set is empty.
    For any category~\CatC, let
    \begin{equation}\label{eq:020-one}
        \NomSet{\CatC}{\Obja}{\Objb}\definedas\makeset{\theonlynor},
    \end{equation}
    and
    for any pair $\Obja,\Objb$ and any $\mora\colon\Obja\mto\Objb$ let
    \begin{equation}\label{eq:021-one-true}
        \nmincompat{\Obja}{\Objb} ( \theonlynor, \mora) = \true.
    \end{equation}
    In this case, the element $\theonlynor$ is a witness for ``$\HomSet{\CatC}{\Obja}{\Objb}$ is empty''.
Next, we need to define the two maps:
    \begin{align}
        \northenbsymb & \colon \HomSet{\CatC}{\Obja}{\Objb} \cartprod \NomSet{\CatC}{\Obja}{\Objc} \to \NomSet{\CatC}{\Objb}{\Objc},
        \\
        \northenasymb & \colon \NomSet{\CatC}{\Obja}{\Objc} \cartprod \HomSet{\CatC}{\Objb}{\Objc} \to \NomSet{\CatC}{\Obja}{\Objb}.
    \end{align}
    The choice is forced, as there is only one norphism in the codomains.
    We obtain:
    \begin{align}\label{eq:022-one-compose}
        \northenb{\mora}{\theonlynor} & = \theonlynor, \\
        \northena{\theonlynor}{\morb} & = \theonlynor.
    \end{align}
    The conditions (\ref{eq:cond1}) and (\ref{eq:cond2}) are easily verified because $\nmincompat{\Obja}{\Objb}$ always evaluates to $\true$.

\end{example}

\begin{example}[Norphism sets are subsets of hom-sets]
    \label{exa:nor-subsets}

    For any category~\CatC, let
    \begin{equation}\label{eq:030-powerset}
        \NomSet{\CatC}{\Obja}{\Objb}=\powerset(\HomSet{\CatC}{\Obja}{\Objb}).
    \end{equation}
    Set the incompatibility relation as
    \begin{equation}\label{eq:030-powerset-incomp}
        \nmincompat{\Obja}{\Objb}(\nora, \mora) = \mora \in \nora.
    \end{equation}
    Define the composition operations as
    \begin{align}\label{eq:030-powerset-compose}
        \northenb{\mora}{\nora} & = \precomp{\mora}^{-1}(\nora), \\
        \northena{\nora}{\morb} & = \postcomp{\morb}^{-1}(\nora),
    \end{align}
    where $\precomp{\mora}$ and $\postcomp{\morb}$ are the pre- and post-composition maps
    \begin{equation}\label{eq:030-powerset-precomp}
        \defmapcomma{
            \precomp{\mora}
        }{
            \HomSet{\CatC}{\Objb}{\Objc}
        }{
            \to
        }{
            \HomSet{\CatC}{\Obja}{\Objc}
        }{
            \morb
        }{
            \mora \mthen \morb
        }
    \end{equation}
    \begin{equation}\label{eq:030-powerset-postcomp}
        \defmapperiod{
            \postcomp{\morb}
        }{
            \HomSet{\CatC}{\Obja}{\Objb}
        }{
            \to
        }{
            \HomSet{\CatC}{\Obja}{\Objc}
        }{
            \mora
        }{
            \mora \mthen \morb
        }
    \end{equation}
    Let's check the condition (\ref{eq:cond1}): $  \nmincompat{\Objb}{\Objc}(\northenb{\mora}{\nora}, \morb)
        \Rightarrow
        \nmincompat{\Obja}{\Objc} ( \nora, \morab )$.
        Using our definitions, we get
    \begin{equation}
        \morb \in \northenb{\mora}{\nora}
        \quad
        \Rightarrow
        \quad
        \morab \in \nora.
    \end{equation}
    Expanding the left-hand side we find
    \begin{align}
        \morb \in \precomp{\mora}^{-1}(\nora)
         & \quad
        \Rightarrow
        \quad
        \morab \in \nora.
    \end{align}
    Another expansion shows that both sides are the same:
    \begin{align}
        \morab \in \nora
         & \quad
        \Rightarrow
        \quad
        \morab \in \nora .
    \end{align}
    Checking condition (\ref{eq:cond2}) is analogous.
    Note that this nategory is exact.

\end{example}

Finally, we provide an example of a nategory that we will use later as a counter-example.

\begin{example}[Very weak composition operations]
    \label{exa:weak}
    For any category~\CatC, as in the previous example, use subsets of morphisms as the norphisms
    \begin{equation}
        \NomSet{\CatC}{\Obja}{\Objb}=\powerset(\HomSet{\CatC}{\Obja}{\Objb}),
    \end{equation}
    and set the incompatibility relation as
    \begin{equation}
        \nmincompat{\Obja}{\Objb}(\nora, \mora) = \mora \in \nora.
    \end{equation}
    However, define the composition operations as
    \begin{align}\label{eq:composition-weak}
        \northenb{\mora}{\nora} & = \emptyset, \\
        \northena{\nora}{\morb} & = \emptyset.
    \end{align}
    The equivariance conditions are still satisfied.
    For example condition (\ref{eq:cond1}),
    \begin{equation}
        \nmincompat{\Objb}{\Objc}(\northenb{\mora}{\nora}, \morb)
        \Rightarrow
        \nmincompat{\Obja}{\Objc} ( \nora, \morab ),
    \end{equation}
    becomes
    \begin{equation}
        \morb \in \emptyset
        \ \Rightarrow \
        \morab \in \emptyset,
    \end{equation}
    which is vacuously satisfied, because the premise is always false.
\end{example}

    \section{Example: hiking on the Swiss mountains}
\label{sec:berg}

In this section we present an example of planning, giving a more concrete description of the path planning problems mentioned in the introduction.
We describe $\Berg$, a category whose morphisms are hiking paths of various difficulty on a mountain.
We then consider the problem of finding paths of minimum length.

\begin{definition}[\Berg]\label{def:Berg}
    Let $\mapelevation \colon \reals^2\to \reals$ be a $C^1$ function, describing the elevation of a mountain.
    The set with elements $\tup{\setAel, \setBel, \mapelevation(\setAel,\setBel)}$ is a manifold $\manifold$ that is embedded in $\reals^3$.
    Let $\steepness=\interv{\steepnessL}{\steepnessU}\setsubset \reals$ be a closed interval of real numbers.
    The category $\Berg_{\mapelevation,\steepness}$ is specified as follows:
    \begin{compactenum}
        \item An object $\Obja$ is a pair $\tup{\pos,\vel}\setin \tangbundle \manifold$, where $\pos = \tupp{\pos_x,\pos_y,\pos_z}$ is the position, $\vel$ is the velocity, and $\tangbundle\manifold$ is the tangent bundle of the manifold.
        \item Morphisms are $C^1$ paths $\mora \colon [0, \tau] \to \manifold$ on the manifold satisfying evident boundary conditions (here $\tau \in \reals$ may vary). We also define, formally by decree, that for each $\tup{\pos,\vel}$ there is a \emph{trivial path} ``$[0,0] \to \manifold$'' which is $C^1$, has trace $\pos$, and velocity $\vel$.  %

              At each point $\pos = \mora(t)$ of a path we define the \emph{steepness} via the formula
              \begin{equation}\label{eq:Berg-steepness}
                  \steepfun(\tup{\pos,\vel}) \definedas \vel_z/\sqrt{\vel_x^2+\vel_y^2},
              \end{equation}
              where $\vel = \tfrac{d}{dt}\mora(t)$.
              We choose as morphisms only the paths that have the steepness values contained in the interval $\steepness$:
              \begin{equation}\label{eq:Berg-Hom}
                  \HomSet{\Berg_{\mapelevation,\steepness}}{\Obja}{\Objb}=\{\mora \text{ is a } C^1 \text{ path from }\Obja \text{ to }\Objb \text{ and } \steepfun(\mora)\setsubseteq \steepness\},
              \end{equation}
        \item Morphism composition is given by concatenation of paths.
        \item Identity morphisms are given by trivial paths. %
    \end{compactenum}
\end{definition}
\begin{figure}[h]
    \begin{center}
        \begin{tikzpicture}
            \node at (0,0) {\includegraphics[width=0.4\linewidth]{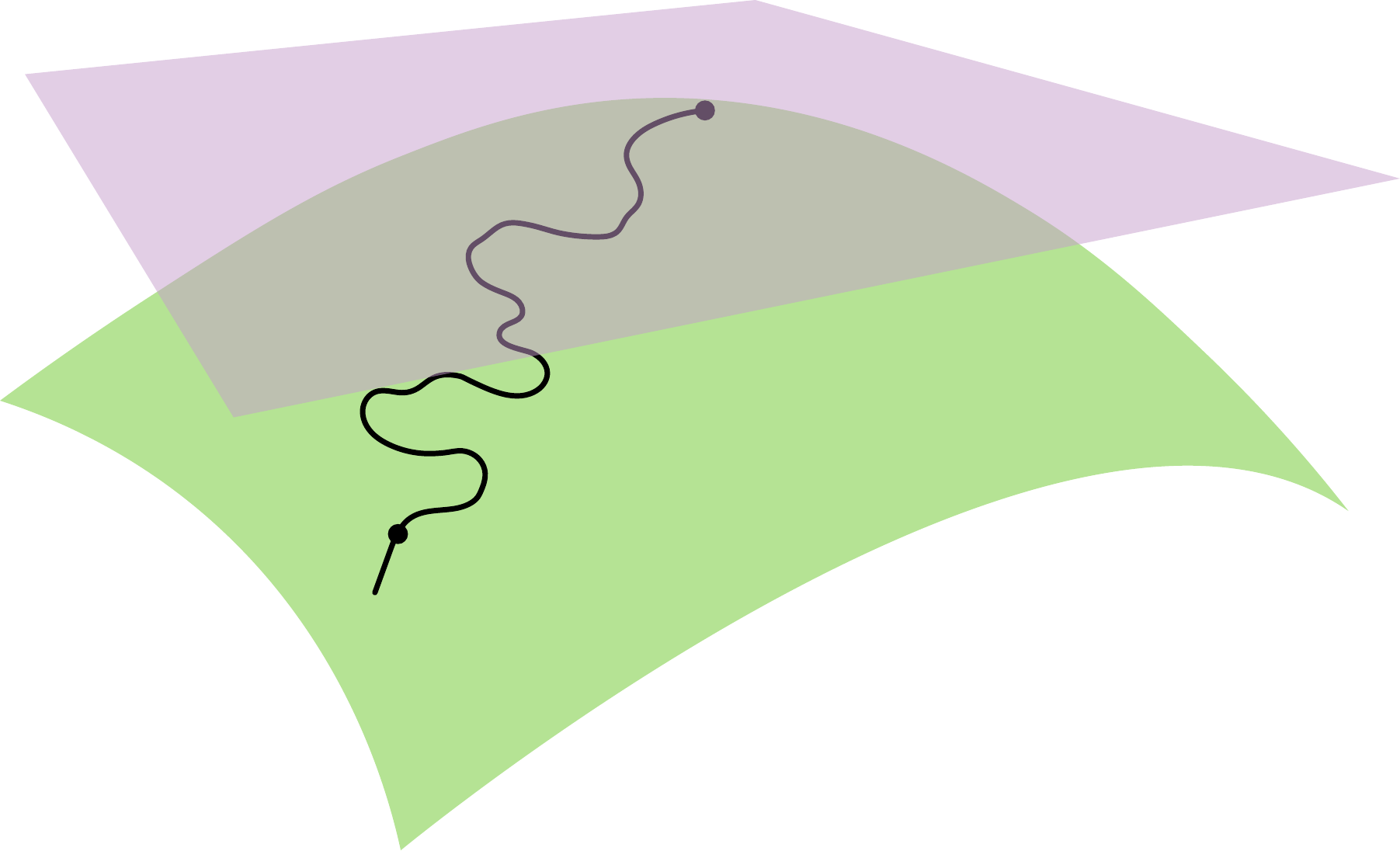}};
            \node at (2,0.1) {$\manifold$};
            \node at (-2.3,1.3){$\tangbundle\manifold$};
            \node at (-1.6,-0.5){$\Obja$};
            \node at (-0.3,1.6){$\Objb$};
            \draw[thick, -Triangle] (0,1.45)--node[pos=0.4,above]{$\vel$} (0.7,1.6){};
        \end{tikzpicture}
    \end{center}
\end{figure}

\ifthenelse{\boolean{proceedings}}{}{For the complete proof that $\Berg$ is a category, we refer the reader to Lemma \ref{lem:bergcat}.}

The steepness interval $\steepness$ allows considering different categories on the same mountain, with possible hikes varying in difficulty, measured via minimum/maximum steepness.
For example, a good hiker can handle $\steepness=\interv{-0.57}{0.57}$ (positive/negative $30^\circ$ slope).
If $\steepness=\interv{-0.57}{0}$, we are only allowed to climb down.
If $\steepness=\interv{0}{0}$, we can only walk along isoclines.

\paragraph{Interpretation of norphisms in $\Berg$}
What might a norphism be in this case?

One possibility is to let a norphism $\nora\colon \Obja \norto \Objb$ mean ``there exists no path from $\Obja$ to $\Objb$''.
This is a simple choice that is similar to Example \ref{exa:nom-all} and that makes morphisms and norphisms mutually exclusive.

We can obtain a more useful theory by letting norphisms carry information that is \emph{complementary} to morphisms
by interpreting them as \emph{lower bounds} on distances.
To see how this can work, let the set of norphisms be the real numbers completed by positive infinity:
\begin{equation}\label{eq:nom-nonneg}
    \NomSet{\Berg}{\Obja}{\Objb} \ \definedas \ \nonNegReals \setunion \makeset{+\infty}.
\end{equation}
Let $\lengthfun(\mora)$ be the length of the path (according to the manifold metric).
Then we interpret a norphism $\nora\colon \Obja \norto \Objb$ as a witness of ``for all paths $\mora\colon \Obja \mto \Objb$, we have $\lengthfun(\mora)\geq \nora$''.
The case $\nora = \infty$ negates any path from $\Obja$ to $\Objb$.
The incompatibility relation $\nmincompat{\Obja}{\Objb}$ can be written as follows:
\begin{equation}\label{eq:nom-nonneg-incompat}
    \nmincompat{\Obja}{\Objb}(\nora, \mora) = \lengthfun(\mora) < \nora.
\end{equation}
To say that a path $\mora$ is optimal means saying that $\mora$ is feasible \emph{and} that $\lengthfun(\mora)$ is a norphism:
\begin{equation}\label{eq:nom-nonneg-optimal}
    \prfdoubleperiod{\mora\colon \Obja \mto \Objb}{\stylenorph{\lengthfunnor(\mora)}\colon \Obja \norto \Objb}{\mora \text{\ is optimal}}
\end{equation}

\paragraph{Composition rules for norphisms}
Next, we define the following two composition rules
\begin{equation}
    \label{eq:berg-compose}
    \begin{aligned}
        \northenb{\mora}{\nora} & = \max\{\nora - \lengthfun(\mora), 0\}, \\
        \northena{\nora}{\morb} & = \max\{\nora - \lengthfun(\morb), 0\},
    \end{aligned}
\end{equation}
which are the equivalent of (\ref{eq:rule-same-source}) and (\ref{eq:rule-same-target}). See ~Fig. \ref{fig:composition-mnor-paths-lengths}.
Our reasoning is as follows. If for example $\mora$ is a path from $\Obja$ to $\Objb$, and we know that going from $\Obja$ to $\Objc$ has a distance of at least $\nora$, then any path from $\Objb$ to $\Objc$ must be at least $\nora - \lengthfun(\mora)$ long.
In this case,
\begin{equation}
    \begin{aligned}
        \nomtohom{\Obja}{\Objc}(\northenb{\mora}{\nora}) & =\{\morb\colon \lengthfun(\morb) < \max\{\nora-\lengthfun(\mora),0\}\}.
    \end{aligned}
\end{equation}
If $\nora < \lengthfun(\mora)$, then $\nomtohom{\Obja}{\Objc}(\northenb{\mora}{\nora})$ is empty, which differs from
\begin{equation}
    \precomp{\mora}^{-1}(\nomtohom{\Obja}{\Objb}(\nora))=\{\morb\colon \lengthfun(\morb)+\lengthfun(\mora)< \nora\}.
\end{equation}
The nategory is not exact.
However, since
\begin{equation}
    \{\morb\colon \lengthfun(\morb) < \max\{\nora-\lengthfun(\mora),0\}\}\subseteq \{\morb\colon \lengthfun(\morb)+\lengthfun(\mora)< \nora\},
\end{equation}
the nategory satisfies (\ref{eq:cond1}).
The check for (\ref{eq:cond2}) is analogous.

\begin{figure}[tb]
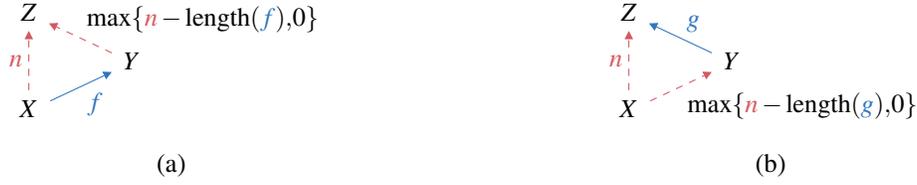

    \begin{center}
        \begin{subfigure}[b]{0.48\textwidth}
            \centering
            \includesag{berg-paths-lengths-b}
            \subcaption{
            }
            \label{fig:nor-comp-target-paths}
        \end{subfigure}
        ~
        \begin{subfigure}[b]{0.48\textwidth}
            \centering
            \includesag{berg-paths-lengths-a}
            \subcaption{
            }
            \label{fig:nor-comp-source-paths}
        \end{subfigure}
    \end{center}
    \vspace{-0.5cm}
    \caption{Composition of morphisms and norphisms in the case of paths and lengths.}
    \label{fig:composition-mnor-paths-lengths}
\end{figure}

\begin{example}\label{ex:berg-exact-version}
As a variant of the above, if we set
\begin{equation}\label{eq:nom-reals}
    \NomSet{\Berg}{\Obja}{\Objb} \ \definedas \ \reals \setunion \makeset{+\infty},
\end{equation}
and define the composition operations as
\begin{equation}\label{eq:nom-reals-compose}
    \begin{aligned}
        \northenb{\mora}{\nora} & = \nora - \lengthfun(\mora), \\
        \northena{\nora}{\morb} & = \nora - \lengthfun(\morb),
    \end{aligned}
\end{equation}
then the nategory is exact.
Indeed, for this case one has
\begin{equation}
    \begin{aligned}
        \nomtohom{\Obja}{\Objc}(\northenb{\mora}{\nora}) & =\{\morb\colon \lengthfun(\morb) < \nora-\lengthfun(\mora)\}, \\
                                                         & =\{\morb\colon \lengthfun(\morb)+\lengthfun(\mora)< \nora\} \\
                                                         & =\precomp{\mora}^{-1}(\nomtohom{\Obja}{\Objb}(\nora)).
    \end{aligned}
\end{equation}
\end{example}

\begin{example}\label{ex:berg-integer-version}
We may also think of a variation in which the norphisms are integers:
\begin{equation}\label{eq:nom-integers}
    \NomSet{\Berg}{\Obja}{\Objb} \ \definedas \ \mathbb{Z} \setunion \makeset{+\infty}.
\end{equation}
In this case we are limited to express constraints of the type
\begin{align}
    \lengthfun(\mora) \geq 0, \ \lengthfun(\mora) \geq 1, \ \lengthfun(\mora) \geq 2, \dots
\end{align}
We then define the composition rules as
\begin{equation}
    \begin{aligned}
        \label{eq:berg-comp-bis}
        \northenb{\mora}{\nora} & = \thefloor{\nora - \lengthfun(\mora)}, \\
        \northena{\nora}{\morb} & = \thefloor{\nora - \lengthfun(\morb)}.
    \end{aligned}
\end{equation}
\showslides{
    \begin{forslides}
        \begin{equation}
            \begin{aligned}
                \label{eq:berg-comp-round}
                \northenb{\mora}{\nora} & = \theround{\nora - \lengthfun(\mora)} \\
                \northena{\nora}{\morb} & = \theround{\nora - \lengthfun(\morb)}
            \end{aligned}
        \end{equation}
    \end{forslides}
}
In this case, (\ref{eq:cond1}) is satisfied, however our nategory is not exact, because in general
\begin{equation}
    \begin{aligned}
        \nomtohom{\Obja}{\Objc}(\northenb{\mora}{\nora}) & =\{\morb\colon \lengthfun(\morb) < \thefloor{\nora-\lengthfun(\mora)}\} \\
                                                         & \subsetneq \{\morb \colon \lengthfun(\morb)< \nora -\lengthfun(\mora)\} \\
                                                         & =\precomp{\mora}^{-1}(\nomtohom{\Obja}{\Objb}(\nora)),
    \end{aligned}
\end{equation}
since $\thefloor{\nora-\lengthfun(\mora)} +\lengthfun(\mora) \leq \nora$.
An analogous reasoning applies to (\ref{eq:cond2}). We note that using $\theround{-}$ or $\theceil{-}$ in (\ref{eq:berg-comp-bis}) would violate (\ref{eq:cond1}) and (\ref{eq:cond2}).
\end{example}

\paragraph{Norphism schemas}

So far, we have not discussed heuristics for actually choosing a set $\Nom$ for each pair of objects in $\Berg$. Here are some different ways.
\begin{compactenum}
    \item \textbf{Non-negativity of lenths}. Since path lengths cannot be negative, for all pair of objects $\Obja, \Objb$ we can say that we have a norphism
          \begin{equation}\label{eq:nor-schema-zero}
              \stylenorph{0}\colon \Obja \norto \Objb.
          \end{equation}
          If these are our only norphisms, we are providing no new information about paths.
    \item \textbf{Bound based on distance in $\reals^3$}.
          Any path along the mountain cannot be shorter than the distance of a straight line (``as the crow flies'').
          Therefore, for two objects $\tupp{\pos^1, \vel^1}$, $\tupp{\pos^2, \vel^2}$, we might
          choose the distance $\| \pos^1 - \pos^2 \| $ in $\reals^3$
          \begin{equation}\label{eq:nor-schema-distance-R3}
              \| \pos^1 - \pos^2 \| \colon \tupp{\pos^1, \vel^1} \norto \tupp{\pos^2, \vel^2}.
          \end{equation}
          as a norphism.
    \item \textbf{Bound based on geodesic distance}.
          More accurate bounds are given by taking geodesic distance as our norphisms.
          This is defined using the metric $d_\manifold$ of the manifold:
          \begin{equation}\label{eq:nor-schema-distance-geodesic}
              d_\manifold(\pos^1, \pos^2) \colon \tupp{\pos^1, \vel^1} \norto \tupp{\pos^2, \vel^2}.
          \end{equation}
    \item \textbf{Bound based on steepness interval}.
          A different kind of norphism is to encode steepness information, and relate it to the steepness of paths, instead of their length.
          Given two objects $\tupp{\pos^1, \vel^1}$, $\tupp{\pos^2, \vel^2}$, we can use one of the following bounds
          \begin{equation}\label{eq:nor-schema-altitudeU}
              \prfcomma{\pos^1_z - \pos^2_z<0}{
                  |\pos^1_z - \pos^2_z| /\steepnessU\colon \tupp{\pos^1, \vel^1} \norto \tupp{\pos^2, \vel^2}
              }
          \end{equation}
          \begin{equation}\label{eq:nor-schema-altitudeL}
              \prfperiod{\pos^1_z - \pos^2_z>0}{
                  |\pos^1_z - \pos^2_z| /\steepnessL\colon \tupp{\pos^1, \vel^1} \norto \tupp{\pos^2, \vel^2}
              }
          \end{equation}
\end{compactenum}

    \section{Example: co-design}
\label{sec:dp}

The next example revolves around the construction of norphisms for the category of design problems~\DP~\cite{censi2022,fong2018seven}; this is called $\textbf{Feas}_{\Bool}$ in~\cite{fong2018seven}.
The objects of \DP are posets.
The morphisms are design problems (also referred to as feasibility relations or boolean profunctors).
A \emph{design problem} (DP) $\adpa \colon \funposA \profto \resposB$ is a {monotone map} of the form $\adp \colon \funposA\op \Ptimes \resposB \toinPos \Bool,$ where~$\funposA, \resposB$ are arbitrary {posets} and $\Bool$ denotes the poset with elements $\makeset{\false, \true}$, with $\false \leq \true$.
The semantics for a DP is that it describes a process which provides a certain functionality, by requiring certain resources.
A design problem $\adp$ is a monotone map, since lowering the requested functionalities will not require more resources, and increasing the available resources will not provide less functionalities.

Morphism composition is defined as follows.
Given DPs $\adpa \colon \funposA \profto \resposB$ and~$\adpb \colon \funposB \profto \resposC$, their composite is
\begin{equation}
    \defmapperiod{
        (\adpa\functhen \adpb)
    }{
        \funposA\posop \Ptimes \resposC
    }{
        \toinPos
    }{
        \Bool
    }{
        \tup{\funposAopel, \resposCel}
    }{
        \bigvee_{\posBel \setin \funposB}
        \adpa(\funposAopel,\resposBel)
        \booland
        \adpb(\funposBopel,\resposCel)
    }
\end{equation}
For any poset~\posA, the {identity DP}~$\catidof\posA \colon \funposA \profto \resposA$ is the monotone map
\begin{equation}
    \label{eq:identity}
    \defmapperiod{
        \catid_\funposA
    }{
        \funposA\posop \Ptimes \resposA
    }{
        \toinPos
    }{
        \Bool
    }{
        \tupp{\funposAel_\F{1}\F{\opel},\resposAel_\R{2}}
    }{
        \funposAel_\F{1} \posleqof{\funposA} \resposAel_\R{2}
    }
\end{equation}

\paragraph*{Interpretation of norphisms in \DP}
Given that the morphisms of \DP are feasibility relations, we expect that the norphisms of \DP (``nesign problems'' NP), should be \emph{infeasibility} relations.
We define a nesign problem 
$\andpa\colon \funsp \nprofto \ressp$
to be a monotone map $\andpa\colon \funsp \Ptimes \ressp\op \to \Bool$, and we interpret $\andpa(f,r\opel)=\true$ to mean that it is \emph{not} possible to produce $f$ from $r$.
The idea is that if $\tup{f_1, r_1}$ is infeasible, then $f_1 \posleqof{} f_2$ implies that $\tup{f_2, r_1}$ is also infeasible and $r_2 \posleqof{} r_1$ implies that $\tup{f_1, r_2}$ is also infeasible. Note that the source poset of a nesign problem is the $\op$ of the source poset for a design problem.

\paragraph*{Compatibility of morphisms and norphisms}
Consider a DP $\adp\colon \funsp \profto \ressp$ and a NP $\andpa\colon \funsp \nprofto \ressp$.
The compatibility relation between DP and NP should ensure that there are no contradictions.
We ask that, for any pair of functionality/resources $\tup{f,r}$, it cannot happen that they are declared feasible by the DP ($\adpa(f\opel,r)$) \emph{and} declared infeasible by the NP ($\andpa(f,r\opel)$):
\begin{equation}
    \nmincompat{\funsp}{\ressp} (\andpa, \adp) = \exists f\setin \funsp, r\setin \ressp \colon \adpa(f\opel,r)\wedge \andpa(f,r\opel).
\end{equation}
\paragraph*{Composition rules for norphisms}
Given a NP $\andpa\colon \posA \nprofto \posB$ and a DP $\adpa\colon \posC \profto \posB$, one can compose them to get a NP $\northena{\andpa}{\adpa}\colon \posA \nprofto \posC$:
\begin{equation}
    (\northena{\andpa}{\adpa})(\posAel,\posCel\opel)=\bigvee_{\posBel \setin \posB}\andpa(\posAel, \posBel\opel) \booland \adp(\posCel\opel, \posBel).
\end{equation}
And given a DP $\adpa\colon \posB\profto \posA$ and a NP $\andpa\colon \posB \nprofto \posC$, one can compose them to get a NP $\adpa \northenbsymb \andpa\colon \posA \nprofto \posC$:
\begin{equation}
    (\northenb{\adpa}{\andpa})(\posAel,\posCel\opel)=\bigvee_{\posBel \setin \posB}\adp(\posBel\opel, \posAel)\booland \andpa(\posBel, \posCel\opel).
\end{equation}

The composition rules satisfy (\ref{eq:cond1}) and (\ref{eq:cond2}) and are exact, as may easily be checked.

\begin{example}
        \label{rem:example-ndp}
        Consider the posets $\posA=\tup{\natnumbers_{[\mathrm{kg \ pears}]},\leq}$, $\posB=\tup{\mathbb{R}_{\geq 0,[\mathrm{CHF}]},\leq }$, and $\posC=\tup{\natnumbers_{[\mathrm{kg \ raisins}]},\leq}$.
        Consider the design problem $\adp\colon \posC \profto \posB$ and the nesign problem $\andpa\colon \posA \nprofto \posB$ given, respectively, by the (in)feasibility relations
        \begin{equation*}
            \prfdoublecomma{\adpa(\F{r}\opel,\R{q})}{\F{r}\cdot 10 \leq \R{q}}\qquad
            \prfdoubleperiod{\andpa(\F{p},\R{q}\opel)}{\F{p}\cdot 5 > \R{q}}
        \end{equation*}
        These say that it is possible to buy raisins at \unit[10]{CHF/kg} or more, and never possible to buy pears at less than \unit[5]{CHF/kg}.
        We can evaluate the composition $(\northena{\andpa}{\adpa})\colon \posA \nprofto \posC$ in a particular point to understand its meaning. For instance:
        \begin{equation*}
            \begin{aligned}
                (\northena{\andpa}{\adpa})(\F{10},\R{4}\opel) & =\bigvee_{\posBel \setin \posB}\andpa(10, \posBel\opel) \booland \adp(4\opel, \posBel) \\
                                                              & =\bigvee_{\posBel \setin \posB} (40\leq q < 50)=\true.
            \end{aligned}
        \end{equation*}
        This equation is saying that we cannot get 10 kilos of pears from 4 kilos of raisins. The rationale is that, if I could, then I would be able to start with $40$ CHF and use $\adpa$ to get  $4$ kilos of raisins, which I could then use to obtain $10$ kilos of pears. But this would contradict the norphism $\andpa$, because $\andpa(10,40) = \true$ holds and this means that it is infeasible to exchange $40$ CHF for 10 kilos of pears. 
\end{example}

\paragraph*{Norphism schemas}
Considerations about how to define norphisms might follow from specific knowledge about particular designs that we know are (in)feasible, as well as from more general principles of physics or information theory. One very general rule that is arguably valid across all fields: in this universe, physically realizable designs can never produce strictly more of the same resource than one started with. This rule can be encoded as a norphism.
For each object $\posA$, we postulate a NP
\begin{equation}\label{eq:postulate-NP}
    \andpa_\posA \colon \posA\nprofto \posA,
\end{equation}
such that
\begin{equation}\label{eq:postulate-NP-val}
    \andpa_\posA(\posBel,\posAel\opel) = \posAel \posprecof{\posA} \posBel,
\end{equation}
where $\posAel \posprecof{\posA} \posBel = (\posAel \posleqof{\posA} \posBel ) \wedge (\posAel \neq \posBel )$.

Interestingly, starting from any morphism
\begin{equation}\label{eq:adp}
    \adp\colon \funsp \profto \ressp,
\end{equation}
one can directly obtain two NPs that go in the opposite direction, $\ressp \nprofto \funsp$.
These are
\begin{equation}\label{eq:inverse1}
    (\northena{\andpa_\ressp}{\adp})(r,f\opel)=\bigvee_{r'\setin \ressp}\andpa_\ressp(r,{r'}\opel)\wedge \adp(f\opel,r'),
\end{equation}
\begin{equation}\label{eq:inverse2}
    (\northenb{\adp}{\andpa_\funsp})(r,f\opel)=\bigvee_{f'\setin \funsp} \adp(f{'}\opel,r)\wedge \andpa_\funsp(f',f\opel).
\end{equation}
which gives two impossibility results.
The first states infeasibility because, while it is possible to get $f$ from $r'$ via $\adp$ for a certain $r'$, it is not possible to obtain $r$ from $r'$.
The second states infeasibility because, while it is possible to get $f'$ from $r$ via $\adp$ for a certain $f'$, it is not possible to obtain $f'$ from $f$.
In this nategory, we see that \emph{positive information induces negative information} in the other direction.

    %
    %

\section{The category $\GSet$}\label{sec:GSet}

The dialectica construction $\GC$ is due to De Paiva~\cite{de1989dialectica,de1989dialecticabis}, and its instantiation in the case $\CatC = \Set$ has been studied from a ``questions and answers'' perspective, for example in~\cite{blass1995questions}.
We will focus on \GSet, however our discussion is also interesting for other cases of the $\GC$ construction.
\proceedings{}{
    Recently, the $\GG(\CatC)$ construction and polynomial functors have been related to each other via a common generalization ~\cite{niu2022}.
}

\begin{definition}[\GSet]\label{def:GSet}
    An object of \GSet is a tuple
    \begin{equation}
        \tup{\gcQ, \gcA, \gcR},
    \end{equation}
    where $\gcQ$ and $\gcA$ are sets, and $\gcR : \gcQ \toinRel \gcA$ is a relation.

    A morphism
    $
        \gcma \colon \tupp{\gcQ_1, \gcA_1, \gcR_1}
        \mtoin\GC
        \tupp{\gcQ_2, \gcA_2, \gcR_2}
    $
    is a pair of maps
    \begin{align}
        \gcma = & \,\tupp{\gcmab, \gcmaf}, \\
                & \ \gcmab: \gcQ_1 \mfromin\Set \gcQ_2, \\
                & \ \gcmaf: \gcA_1 \mtoin\Set \gcA_2,
    \end{align}
    that satisfy the property
    \begin{equation}\label{eq:010-gset-mor-cond}
        \forall \gcq_2: \gcQ_2 \quad \forall \gca_1: \gcA_1 \quad
        \inrel{\gcmab(\gcq_2)}{\gcR_1}{\gca_1}
        \ \Imp\
        \inrel{\gcq_2}{\gcR_2}{\gcmaf(\gca_1)}.
    \end{equation}
    Morphism composition is defined component-wise
    \begin{align}
        \gcmB{\theseries} = \gcmbb \functhen \gcmab, \\
        \gcmF{\theseries} = \gcmaf \functhen \gcmbf,
    \end{align}
    and satisfies (\ref{eq:010-gset-mor-cond}) via composition of implications.

    The identity at $\tup{\gcQ, \gcA, \gcR}$ is
    $
        \catidat{\tup{\gcQ, \gcA, \gcR}} =
        \tup{
            \catidat{\gcQ}, \catidat{\gcA}
        }.
    $
\end{definition}

\begin{remark}
    Our notation was chosen to facilitate a ``questions and answers'' interpretation~\cite{blass1995questions}.
    In this perspective, an object of \GSet is a ``problem'': a relation $\gcR$ between
    a set of questions $\gcQ$ and a set of answers $\gcA$.
    For a particular question $\gcq\in\gcQ$
    and answer $\gca\in\gcA$, $\inrel{\gcq}{\gcR}{\gca}$ means that the answer is correct for the question.
    A morphism $
        \gcma \colon \tupp{\gcQ_1, \gcA_1, \gcR_1}
        \mtoin\GC
        \tupp{\gcQ_2, \gcA_2, \gcR_2}
    $ is a \emph{reduction} of problem 2 to problem 1, in the sense that we can use a solution to problem 1 to solve problem 2.
    The idea is that we start from a question $\gcq_2$ and transform it to a question $\gcq_1 = \gcmab(\gcq_2)$ of the first problem. Assuming we can find an answer $\gca_1$ to $\gcq_1$, we can then transform it in an answer of the second problem $\gca_2 = \gcmaf(\gca_1)$. The condition (\ref{eq:010-gset-mor-cond}) ensures that the answer so produced is correct for the second problem.
\end{remark}

We now rewrite the objects of \GSet in a slightly different way.
Instead of
\begin{equation}
    \gcR : \gcQ \toinRel \gcA,
\end{equation}
we can write this relation as a boolean function
\begin{equation}
    \label{eq:gcc-bool}
    \gcC \colon \gcQ \cartprod \gcA \to \makeset{\false, \true}.
\end{equation}
Letting $\Bool$ denote the category with two objects ``$\false$'' and ``$\true$'' and a single non-identity morphism $\Imp\colon \false \mto_{\Bool} \true$, we can rewrite (\ref{eq:gcc-bool}) again as
\begin{equation}\label{eq:dependent-implications}
    \gcC \colon \gcQ \cartprod \gcA \to \Obof \Bool.
\end{equation}
And then condition (\ref{eq:010-gset-mor-cond}) can be rewriten as a dependent function
\begin{equation}\label{eq:depfun-implication}
    \gcmae : \{\gcq_2: \gcQ_2, \gca_1: \gcA_1 \}
    \to \gcC_1 (\gcmab(\gcq_2), \gca_1)
    \mtoin\Bool \gcC_2 ( \gcq_2, \gcmaf(\gca_1) ).
\end{equation}
The value $\gcmae(\gcq_2, \gca_1)$ is a morphism in $\Bool$ that witnesses an implication.

\begin{remark}
    One idea that we find interesting is to replace $\Bool$ with some other category \CatB (on which we may wish to place suitable assumptions) and can consider maps of the form
    \begin{equation}
        \gcC \colon \gcQ \cartprod \gcA \to \Obof \CatB.
    \end{equation}
    If $\CatB$ is some category whose morphisms are ``proofs'', the analogue of (\ref{eq:depfun-implication})
    chooses a proof which is just one among many possible proofs (and such proofs might themselves be ordered by relevance or other criteria).
\end{remark}

\proceedings{}{
    \begin{definition}[Category $\GSetB$]\label{def:GSetB}
        Let \CatB be any category.
        An object of the category $\GSetB$ is a tuple
        \begin{equation}
            \tup{\gcQ, \gcA, \gcC},
        \end{equation}
        where $\gcQ$ is a set; $\gcA$ is a set, $\gcC$ is a function
        \begin{equation}
            \gcC \colon \gcQ \cartprod \gcA \to \Obof\CatB.
        \end{equation}
        A morphism
        $
            \gcma \colon \tupp{\gcQ_1, \gcA_1, \gcC_1}
            \mto%
            \tupp{\gcQ_2, \gcA_2, \gcC_2}
        $
        is a tuple of three functions
        \begin{align}
            \gcma = & \,\tupp{\gcmab, \gcmaf, \gcmae}, \\
                    & \ \gcmab: \gcQ_1 \mfromin\Set \gcQ_2, \\
                    & \ \gcmaf: \gcA_1 \mtoin\Set \gcA_2, \\
                    & \gcmae : \{\gcq_2: \gcQ_2, \gca_1: \gcA_1 \} \label{eq:depfun}
            \to \gcC_1 (\gcmab(\gcq_2), \gca_1)
            \mtoin\CatB \gcC_2 ( \gcq_2, \gcmaf(\gca_1) ).
        \end{align}
        The composition of the above morphism $\gcma$ with
        $
            \gcmb \colon \tupp{\gcQ_2, \gcA_2, \gcC_2}
            \mto %
            \tupp{\gcQ_3, \gcA_3, \gcC_3}
        $
        is defined as follows:
        \begin{align}
            \gcmB{\theseries} & = \gcmbb \functhen \gcmab, \\
            \gcmF{\theseries} & = \gcmaf \functhen \gcmbf, \\
            \gcmE{\theseries} & \colon
            \tupp{\gcq_3, \gca_1}
            \mapsto
            \gcmae (\gcmbb(\gcq_3) , \gca_1)
            \mthenof\CatB
            \gcmbe (\gcq_3 , \gcmaf(\gca_1) ).
        \end{align}
        More explicitly,
        \begin{equation}
            \begin{aligned}
                \gcmE{\theseries}
                 &
                \colon
                \tupp{\gcq_3, \gca_1}
                \mapsto \\
                 & \gcC_1( (\gcmbb \functhen \gcmab)(\gcq_3), \gca_1)
                \xrightarrow{\mbox{$\gcmae (\gcmbb(\gcq_3) , \gca_1 )$}}
                \gcC_2( \gcmbb(\gcq_3), \gcmaf(\gca_1))
                \xrightarrow{\mbox{$\gcmbe (\gcq_3 , \gcmaf(\gca_1) )$}}%
                \gcC_3(\gcq_3, (\gcmaf \functhen \gcmbf) (\gca_1)).
            \end{aligned}
        \end{equation}
        The identity at $\tupp{\gcQ, \gcA, \gcC}$ is given by $\tupp{\catidat{\gcQ}, \catidat{\gcA},
                \tupp{\gcq, \gca} \mapsto \catidat{\gcC(\gcq,\gca)}}$.
    \end{definition}

}
\proceedings{}{
    For completeness, we give the further generalization, in which we have \Category instead of \Set and $\gcmae$ becomes a natural transformation.

    \begin{definition}[$\GG(\Category, \gcE)$]\label{def:GCatB}
        Given a category $\gcE$,
        an object of $\GG(\Category, \gcE)$ is a tuple
        \begin{equation}
            \tup{\gcQ, \gcA, \gcC },
        \end{equation}
        where $\gcQ$ is a category, $\gcA$ is a category, $\gcC$ is a functor
        \begin{equation}
            \gcC \colon \gcQ \op \times \gcA \to \gcE.
        \end{equation}
        A morphism
        $
            \gcma \colon \tupp{\gcQ_1, \gcA_1, \gcC_1}
            \mtoin\GCat
            \tupp{\gcQ_2, \gcA_2, \gcC_2}
        $
        is a tuple
        \begin{equation}
            \gcma = \tupp{\gcmab, \gcmaf, \gcmae},
        \end{equation}
        where
        \begin{itemize}
            \item $\gcmab: \gcQ_2 \mtoin{\Category} \gcQ_1$ is a functor,
            \item $\gcmaf: \gcA_1 \mtoin{\Category} \gcA_2$ is a functor,
            \item $\gcmae$ is a natural transformation between two functors
                  \begin{equation}
                      \funa, \funb: \gcQ_2\op \times \gcA_1 \to \gcE,
                  \end{equation}
                  defined as
                  \begin{align}
                      \funa & = (\gcmab \times \funidat{\gcA_1}) \fthen \gcC_1, \\
                      \funb & = (\funidat{\gcQ_2\op} \times \gcmaf) \fthen \gcC_2 .
                  \end{align}
        \end{itemize}
        \todo[inline]{Need to finish with composition}
    \end{definition}
}

\proceedings{}{
    \begin{table}[!h]
        \caption{Linear logic binary-operators}
        \begin{tabular}{cccc}
                            & connective & unit      & \\
            multiplicatives & $\lltimes$ & $\llone$  & \\
                            & $\llpar$   & $\llbot$  & \\
            additives       & $\llplus$  & $\llzero$ & \\
                            & $\llwith$  & $\lltop$
        \end{tabular}
    \end{table}
}

\proceedings{}{
    \subsection{Monoidal product $\gcMeh$}
    \begin{definition}[Monoidal product $\gcMeh$]\label{def:meh}
        Assume \CatB has finite products.
        The action on the objects is defined as follows:
        \begin{equation}
            \tupp{\gcQ_1, \gcA_1, \gcC_1}
            \gcMehob
            \tupp{\gcQ_2, \gcA_2, \gcC_2}
            = \tupp{
                \gcQ_1 \cartprod \gcQ_2,
                \gcA_1 \cartprod \gcA_2,
                \gcC_1 \gcCmeh \gcC_2
            }
        \end{equation}
        \begin{equation}
            \gcC_1 \gcCmeh \gcC_2 : \tupp{\tupp{\gcq_1, \gcq_2}, \tupp{\gca_1, \gca_2}}
            \mapsto \gcC_1(\gcq_1, \gca_1) \cprodob\CatB \gcC_2(\gcq_2, \gca_2 ),
        \end{equation}
        where $\cprodob\CatB$ is the product of two objects in $\CatB$.
        The monoidal unit is
        \begin{equation}
            1_{\gcMeh} = \tupp{\makeset{\theonlyQ}, \makeset{\theonlyA}, \gctrue},
            \qquad \gctrue \colon \tupp{\theonlyQ, \theonlyA} \mapsto \termobof\CatB.
        \end{equation}
        The product of
        $
            \gcma \colon \tupp{\gcQ_1, \gcA_1, \gcC_1}
            \mto
            \tupp{\gcQ_3, \gcA_3, \gcC_3} $ and $
            \gcmb \colon \tupp{\gcQ_2, \gcA_2, \gcC_2}
            \mto
            \tupp{\gcQ_4, \gcA_4, \gcC_4}
        $ is
        \begin{equation}
            \gcma \gcMehmor \gcmb \colon
            \tupp{
                \gcQ_1 \cartprod \gcQ_2 ,
                \gcA_1 \cartprod \gcA_2,
                \gcC_1 \gcCmeh \gcC_2
            }
            \mto
            \tupp{
                \gcQ_3 \cartprod \gcQ_4,
                \gcA_3 \cartprod \gcA_4,
                \gcC_3 \gcCmeh \gcC_4
            }
        \end{equation}
        \begin{align}
            \gcmB{\themeh} & = \gcmab \funcprod \gcmbb, \\
            \gcmF{\themeh} & = \gcmaf \funcprod \gcmbf, \\
            \gcmE{\themeh} & \colon \tupp{\tupp{\gcq_3, \gcq_4}, \tupp{\gca_1, \gca_2}}
            \mapsto
            \gcmae ( \gcq_3, \gca_1)
            \cprodmor\CatB
            \gcmbe (\gcq_4, \gca_2).
            \label{eq:meh-m}
        \end{align}
    \end{definition}

    \subsection{Monoidal product $\lltimes$}
    \begin{definition}[Monoidal product $\lltimes$]\label{def:lltimes}
        Assume \CatB has finite products.
        The action on the objects is defined as follows:
        \begin{equation}
            \tupp{\gcQ_1, \gcA_1, \gcC_1}
            \lltimesob
            \tupp{\gcQ_2, \gcA_2, \gcC_2}
            = \tupp{
                \gcQ_1 ^ {\gcA_2} \cartprod \gcQ_2 ^ {\gcA_1},
                \gcA_1 \cartprod \gcA_2,
                \gcC_1 \gcCtimes \gcC_2
            }
        \end{equation}
        \begin{equation}
            \gcC_1 \gcCtimes \gcC_2 : \tupp{\tupp{\gcq_1, \gcq_2}, \tupp{\gca_1, \gca_2}}
            \mapsto \gcC_1(\gcq_1(\gca_2), \gca_1) \cprodob\CatB \gcC_2(\gcq_2 (\gca_1), \gca_2 ),
        \end{equation}
        where $\cprodob\CatB$ is the product of two objects in $\CatB$.
        The monoidal unit is
        \begin{equation}
            1_{\otimes} = \tupp{\makeset{\theonlyQ}, \makeset{\theonlyA}, \gctrue},
            \qquad \gctrue \colon \tupp{\theonlyQ, \theonlyA} \mapsto \termobof\CatB.
        \end{equation}
        The product of
        $
            \gcma \colon \tupp{\gcQ_1, \gcA_1, \gcC_1}
            \mto
            \tupp{\gcQ_3, \gcA_3, \gcC_3} $ and $
            \gcmb \colon \tupp{\gcQ_2, \gcA_2, \gcC_2}
            \mto
            \tupp{\gcQ_4, \gcA_4, \gcC_4}
        $ is
        \begin{equation}
            \gcma \lltimesmor \gcmb \colon
            \tupp{
                \gcQ_1 ^ {\gcA_2} \cartprod \gcQ_2 ^ {\gcA_1},
                \gcA_1 \cartprod \gcA_2,
                \gcC_1 \gcCtimes \gcC_2
            }
            \mto
            \tupp{
                \gcQ_3 ^ {\gcA_4} \cartprod \gcQ_4 ^ {\gcA_3},
                \gcA_3 \cartprod \gcA_4,
                \gcC_3 \gcCtimes \gcC_4
            }
        \end{equation}
        \begin{align}
            \gcmB{\theprod} & =
            \tupp{
                \gcmbf \functhen - \functhen \gcmab,
                \gcmaf \functhen - \functhen \gcmbb
            }, \\
            \gcmF{\theprod} & = \gcmaf \funcprod \gcmbf, \\
            \gcmE{\theprod} & \colon \tupp{\tupp{\gcq_3, \gcq_4}, \tupp{\gca_1, \gca_2}}
            \mapsto
            \gcmae ( (\gcmbf\functhen\gcq_3)(\gca_2), \gca_1)
            \cprodmor\CatB
            \gcmbe ( (\gcmaf\functhen\gcq_3)(\gca_1), \gca_2).
            \label{eq:otimes-m}
        \end{align}
    \end{definition}
    The ``$\cprodmor\CatB$'' in (\ref{eq:otimes-m}) is the product of the two morphisms in $\CatB$.

    \subsubsection{Derivation of (\ref{eq:otimes-m})}
    The function $\gcmE{\theprod}$ needs to have type
    \begin{align}
        \gcmE{\theprod} : & \{\tupp{\gcq_3, \gcq_4} : \gcQ_3 ^ {\gcA_4} \cartprod \gcQ_4 ^ {\gcA_3},
        \tupp{\gca_1, \gca_2}: \gcA_1 \cartprod \gcA_2 \}
        \to \\
                          & \gcC_1 \gcCtimes \gcC_2 \pars{\gcmB{\theprod}(\tupp{\gcq_3, \gcq_4}) , \tupp{\gca_1, \gca_2}}
        \mtoin\CatB \\
                          & \gcC_3 \gcCtimes \gcC_4 \pars {\tupp{\gcq_3, \gcq_4}, \gcmaf \funcprod \gcmbf(\tupp{\gca_1, \gca_2}) }
    \end{align}
    Simplifying, we get that we should find a map
    \begin{align}
        \gcmE{\theprod} & \colon \tupp{\tupp{\gcq_3, \gcq_4}, \tupp{\gca_1, \gca_2}}
        \mapsto \\
                        & \gcC_1 \gcCtimes \gcC_2 \pars{\tupp{\gcmbf \functhen \gcq_3 \functhen \gcmab, \gcmaf \functhen \gcq_4 \functhen \gcmbb} , \tupp{\gca_1, \gca_2}}
        \mtoin\CatB \\
                        & \gcC_3 \gcCtimes \gcC_4 \pars {\tupp{\gcq_3, \gcq_4}, \tupp{\gcmaf(\gca_1), \gcmbf(\gca_2)} }
    \end{align}
    Expanding $\gcC_1 \gcCtimes \gcC_2$ and $\gcC_3 \gcCtimes \gcC_4$ we have
    \begin{align}
        \gcmE{\theprod} & \colon \tupp{\tupp{\gcq_3, \gcq_4}, \tupp{\gca_1, \gca_2}}
        \mapsto \\
                        & \gcC_1(\gcmab(\gcmbf \functhen \gcq_3 (\gca_2)), \gca_1) \cprodob\CatB
        \gcC_2(\gcmbb(\gcmaf \functhen \gcq_4 (\gca_1)), \gca_2 )
        \mtoin\CatB \\
                        & \gcC_3(\gcmbf \functhen \gcq_3(\gca_2), \gcmaf(\gca_1))
        \cprodob\CatB
        \gcC_4(\gcmaf \functhen \gcq_4(\gca_1), \gcmbf(\gca_2) )
    \end{align}
    Now notice that we can use $\gcmae$ and $\gcmbe$ to obtain the two morphisms
    \begin{align}
        \gcmae ( (\gcmbf\functhen\gcq_3)(\gca_2), \gca_1): \gcC_1(\gcmab(\gcmbf \functhen \gcq_3 (\gca_2)), \gca_1)
        \mtoin\CatB \gcC_3(\gcmbf \functhen \gcq_3(\gca_2), \gcmaf(\gca_1))
        \\
        \gcmbe ( (\gcmaf\functhen\gcq_3)(\gca_1), \gca_2): \gcC_2(\gcmbb(\gcmaf \functhen \gcq_4 (\gca_1)), \gca_2 )
        \mtoin\CatB \gcC_4(\gcmaf \functhen \gcq_4(\gca_1), \gcmbf(\gca_2) )
    \end{align}

    Therefore, we can take the product of the two morphisms and obtain a morphism of the right type:
    \begin{equation}
        \gcmE{\theprod} \colon \tupp{\tupp{\gcq_3, \gcq_4}, \tupp{\gca_1, \gca_2}}
        \mapsto
        \gcmae ( (\gcmbf\functhen\gcq_3)(\gca_2), \gca_1)
        \funcprod
        \gcmbe ( (\gcmaf\functhen\gcq_3)(\gca_1), \gca_2).
    \end{equation}

    \subsection{Monoidal product $\llpar$}
    \begin{definition}[Monoidal product $\llpar$]\label{def:llpar}
        Assume \CatB has finite coproducts.
        The action on the objects is defined as follows:
        \begin{equation}
            \tupp{\gcQ_1, \gcA_1, \gcC_1}
            \llparob
            \tupp{\gcQ_2, \gcA_2, \gcC_2}
            = \tupp{
                \gcQ_1 \cartprod \gcQ_2,
                \gcA_1 ^ {\gcQ_2} \cartprod \gcA_2 ^ {\gcQ_1},
                \gcC_1 \gcCpar \gcC_2
            }
        \end{equation}
        \begin{equation}
            \gcC_1 \gcCpar \gcC_2 : \tupp{\tupp{\gcq_1, \gcq_2}, \tupp{\gca_1, \gca_2}}
            \mapsto \gcC_1(\gcq_1(\gca_2), \gca_1) \csumob\CatB \gcC_2(\gcq_2(\gca_1), \gca_2 ),
        \end{equation}
        where $\csumob\CatB$ is the coproduct of two objects in $\CatB$.
        The monoidal unit is
        \begin{equation}
            1_{\llpar} = \tupp{\makeset{\theonlyQ}, \makeset{\theonlyA}, \gcfalse}, \qquad \gcfalse \colon \tupp{\theonlyQ, \theonlyA} \mapsto \initobof\CatB.
        \end{equation}
        The product of
        $
            \gcma \colon \tupp{\gcQ_1, \gcA_1, \gcC_1}
            \mto
            \tupp{\gcQ_3, \gcA_3, \gcC_3}$, $
            \gcmb \colon \tupp{\gcQ_2, \gcA_2, \gcC_2}
            \mto
            \tupp{\gcQ_4, \gcA_4, \gcC_4}
        $
        is
        \begin{equation}
            \gcma \llparmor \gcmb \colon
            \tupp{
                \gcQ_1 \cartprod \gcQ_2,
                \gcA_1 ^ {\gcQ_2} \cartprod \gcA_2 ^ {\gcQ_1},
                \gcC_1 \gcCpar \gcC_2
            }
            \mto
            \tupp{
                \gcQ_3 \cartprod \gcQ_4,
                \gcA_3 ^ {\gcQ_4} \cartprod \gcA_4 ^ {\gcQ_3},
                \gcC_3 \gcCpar \gcC_4
            }
        \end{equation}
        \begin{align}
            \gcmB{\thepar} & = \gcmab \funcprod \gcmbb, \\
            \gcmF{\thepar} & =
            \tupp{
                \gcmbb \functhen - \functhen \gcmaf,
                \gcmab \functhen - \functhen \gcmbf
            }, \\
            \gcmE{\thepar} & \colon \tupp{\tupp{\gcq_3, \gcq_4}, \tupp{\gca_1, \gca_2}}
            \mapsto
            \gcmae ( \dots, \gca_1)
            \csummor\CatB
            \gcmbe (\dots, \gca_2).
            \label{eq:par-m}
        \end{align}
        where $\csummor\CatB$ is the coproduct of two morphisms in $\CatB$.
    \end{definition}
    \todo[inline]{Finish formula above}
}

\subsection{A monoidal product for $\GSet$ }

The categories $\GC$ have a very rich structure.
In particular they provide models of linear logic with four distinct monoidal products $\lltimes$, $\llpar$, $\llplus$, and $\llwith$.
We define here a monoidal product $\gcPlus$ for $\GSet$ which one might say is ``in between'' the the multiplicative connectives $\lltimes$ and $\llpar$ (which are denoted $\oslash$ and $\square$, respectively, in \cite{de1989dialectica,de1989dialecticabis}).

\begin{definition}[Monoidal product $\gcPlus$]\label{def:gcPlus}
    On objects, 
    \begin{equation}
        \tupp{\gcQ_1, \gcA_1, \gcC_1}
        \gcPlusob
        \tupp{\gcQ_2, \gcA_2, \gcC_2}
        = \tupp{
            \gcQ_1 ^ {\gcA_2} \cartprod \gcQ_2 ^ {\gcA_1},
            \gcA_1 \cartprod \gcA_2,
            \gcC_1 \gcCplus \gcC_2
        },
    \end{equation}
    where
    \begin{equation}
        \gcC_1 \gcCplus \gcC_2: \tupp{\tupp{\gcq_1, \gcq_2}, \tupp{\gca_1, \gca_2}}
        \mapsto \gcC_1(\gcq_1(\gca_2), \gca_1) \vee \gcC_2(\gcq_2(\gca_1), \gca_2 ).
    \end{equation}

    The product of morphisms
    $
        \gcma \colon \tupp{\gcQ_1, \gcA_1, \gcC_1}
        \mto
        \tupp{\gcQ_3, \gcA_3, \gcC_3}$ and $
        \gcmb \colon \tupp{\gcQ_2, \gcA_2, \gcC_2}
        \mto
        \tupp{\gcQ_4, \gcA_4, \gcC_4}
    $ is
    \begin{equation}
        \gcma \gcPlusmor \gcmb \colon
        \tupp{
            \gcQ_1 ^ {\gcA_2} \cartprod \gcQ_2 ^ {\gcA_1},
            \gcA_1 \cartprod \gcA_2,
            \gcC_1 \gcCplus \gcC_2
        }
        \mto
        \tupp{
            \gcQ_3 ^ {\gcA_4} \cartprod \gcQ_4 ^ {\gcA_3},
            \gcA_3 \cartprod \gcA_4,
            \gcC_3 \gcCplus \gcC_4
        },
    \end{equation}
    with
    \begin{align}
        \gcmB{\thesum} & = %
        \tupp{
            \gcmbf \functhen - \functhen \gcmab,
            \gcmaf \functhen - \functhen \gcmbb
        }, \\
        \gcmF{\thesum} & = \gcmaf \funcprod \gcmbf, \\
        \gcmE{\thesum} & \colon \tupp{\tupp{\gcq_3, \gcq_4}, \tupp{\gca_1, \gca_2}}
        \mapsto
        \gcmae ( \gcmbf\functhen\gcq_3(\gca_2), \gca_1)
        \vee
        \gcmbe ( \gcmaf\functhen\gcq_3(\gca_1), \gca_2).
        \label{eq:oplus-sum}
    \end{align}
    The monoidal unit is
    \begin{equation}
        1_{\gcPlus} = \tupp{\makeset{\theonlyQ}, \makeset{\theonlyA}, \gcfalse},
        \qquad
        \gcfalse \colon \tupp{\theonlyQ, \theonlyA} \mapsto \false.
    \end{equation}
    For the associator and unitors we make the canonical choices, which are easily inferred from their signatures.
    We refrain from writing them out explicitly here.
    For reasons of space we also omit the the proof that $\tupp{\GSet,\gcPlus}$ is indeed a monoidal category.
\end{definition}

\begin{remark}
    In the generalization where we replace $\Bool$ with some category $\CatB$, the operation $\vee$ and the object $\false$ in $\Bool$ which are used in the above definition would be replaced by suitable substitutes in $\CatB$.
\end{remark}

    \ifthenelse{\boolean{proceedings}}{}{\clearpage}
    \section{Describing nategories using enrichment}
\label{sec:payoff}

We recall the following standard definition of enriched category \cite{kelly1982basic}, for easy reference and to fix notation.

\begin{definition}[Enriched category]
    \label{def:enrichment-structure}
    Let~$\tup{\CatV, \mtimescat, \idmoncat, \associator, \leftunitor,\rightunitor}$ be a {monoidal category}.
    \\
    A \CatV-enriched category \CatE is a tuple
    $
        \tupp{\Ob_\CatE, \homobjectset{}, \enm, \enid_{}},
    $
    where
    \begin{compactenum}
        \item $\Ob_\CatE$ is a collection of objects.
        \item $\homobjectset{}$ is a function such that, for all pairs of objects~$\Obja,\Objb\setin \Ob_\CatE$, its value~$\homobject{}{\Obja}{\Objb}$ is an object of~$\CatV$, called a \emph{hom-object}. %
        \item $\enm$ is a function such that, for all~$\Obja, \Objb, \Objc\setin \Ob_\CatE$, there exists a morphism~$\enmof{}{\Obja}{\Objb}{\Objc}$ of~\CatV
              \begin{equation}
                  \enmof{}{\Obja}{\Objb}{\Objc}\colon\homobject{}{\Obja}{\Objb} \mtimescatob \homobject{}{\Objb}{\Objc} \mtoin\CatV \homobject{}{\Obja}{\Objc},
              \end{equation}
              called a \emph{composition morphism}.

        \item $\enid_{}$ is a function such that, for each~$\Obja\setin \ObE$, there exists a morphism of~\CatV
              \begin{equation}
                  \enidof{}{\Obja} \colon \idmoncat \mtoin{\CatV} \homobject{}{\Obja}{\Obja},
              \end{equation}
              called an \emph{identity-choosing morphism}.
    \end{compactenum}
    Moreover, for any~$\Obja,\Objb,\Objc,\Objd\setin \ObE$, the following diagrams must commute.

    \begin{equation}
        \label{eq:enrichment-assoc2-act}
        \includesag{enrichment_assoc}
    \end{equation}
    \begin{equation}
        \label{eq:enrichment-unital2}
        \includesag{enrichment_unital}
    \end{equation}
\end{definition}
Recall that specifying the data of an ordinary (locally small) category is equivalent to specifying a category enriched in the monoidal category $\prcatp=\tup{\Set, \cartprod, \singleton}$.
In this case, both the enriched category and the ordinary category have the same objects, the hom-objects of the enriched category correspond to the hom-sets of the ordinary category, the composition morphisms encode the composition operations, and the identity-choosing morphisms select an element of each of the hom-sets of the type $\homobject{}{\Obja}{\Obja}$, corresponding to identity morphisms.
The diagrams (\ref{eq:enrichment-assoc2-act}) and (\ref{eq:enrichment-unital2}) encode the associativity of the composition operations, and that the identity morphisms act neutrally for composition.

\proceedings{}{
    \begin{proof}
        We show one direction.
        Suppose that we are given a $\prcatp$-enriched category as a tuple
        $\tupp{\Ob, \homobjectset{}, \enm, \enid_{}}.
        $
        We can define a small category $\CatC$  as follows:
        \begin{compactitem}
            \item Set $\ObC \definedas \ObE$.
            \item For each $\Obja,\Objb\setin\ObC$, let $\HomSet{\CatC}{\Obja}{\Objb} \definedas \homobject{}{\Obja}{\Objb}$.
            \item For each $\Obja,\Objb,\Objc \setin \ObC$, we know a function
                  \begin{equation}
                      \enmof{}{\Obja}{\Objb}{\Objc}:\HomSet{\CatC}{\Obja}{\Objb}\mtimescat \HomSet{\CatC}{\Objb}{\Objc}\mtoin{\Set} \HomSet{\CatC}{\Obja}{\Objc}.
                  \end{equation}

                  The diagrams constraints imply that this function is associative.
                  \\Therefore, we use it to define morphism composition in $\CatC$, setting $\mthen_{\Obja,\Objb,\Objc}\definedas \enmof{}{\Obja}{\Objb}{\Objc}$.
            \item For each $\Obja\setin\ObC$ we know a function $\enidof{}{\Obja}\colon\singleton\mtoin{\Set} \HomSet{\CatC}{\Obja}{\Obja}$ that selects a morphism.\\
                  The diagrams constraints imply that such morphism satisfies unitality
                  with respect to~$\mthen_{\Obja,\Objb,\Objc}$. \\Therefore, we can use it to define the identity at each object:
                  \begin{equation}
                      \label{eq:ja-hom}
                      \catidat{\Obja} \definedas \enidof{}{\Obja}(\singletonel).
                  \end{equation}
        \end{compactitem}
    \end{proof}
    The commutative diagrams are clearly satisfied, by choosing the obvious associator and left/right unitors.
}

The proof of our main result below follows a similar pattern -- we show that to specify a nategory which satisfies some additional conditions it is sufficient to specify a category enriched in the monoidal category $\tup{\GSet, \gcPlus}$.
We will denote $\tup{\GSet, \gcPlus}$ by \prcat, which stands for ``positive'' and ``negative''.

\begin{proposition}\label{prop:PN-enrich}
    A \prcat-enriched category provides the data necessary to specify a nategory.
    However, not all nategories can be specified by the data of a \prcat-enriched category,
    because the nategory produced has the following additional properties, which encode a covariant and a contravariant ``action'' of morphisms on norphisms.

    \noindent \emph{Identities act neutrally:}
    \begin{align}
        \northenb{\catidat{}}{\nora}  = \nora \tag{neut-1} \label{eq:neutrality1}, \\
        \northena{\nora}{\catidat{}} = \nora \tag{neut-2}\label{eq:neutrality2}.
    \end{align}

    \noindent \emph{Compatibility with composition}:
    \begin{align}
        \northenb{(\morab)}{\nora}            & =\northenb{\morb}{  (\northenb{\mora}{\nora})}, \tag{covar}\label{eq:compat-comp1} \\
        \northena{\nora}{ (\morb\mthen\morc)} & =\northena{(\northena{\nora}{\morc})}{\morb}. \tag{contravar}\label{eq:compat-comp2}
    \end{align}
    \noindent \emph{The actions commute:}
    \begin{align}
        \northenb{\mora}{ (\northena{\nora}{\morc})} & =\northena{(\northenb{\mora}{\nora}) }{ \morc}.\tag{comm}
    \end{align}
    These conditions are not satisfied by all nategories.
\end{proposition}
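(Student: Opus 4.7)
The plan is to unpack the data of a \prcat-enriched category \CatE and read off a nategory equipped with the additional properties. Writing each hom-object as $\homobject{}{\Obja}{\Objb}=\tupp{\gcQ_{\Obja\Objb},\gcA_{\Obja\Objb},\gcC_{\Obja\Objb}}$, I would set $\HomSet{\CatE}{\Obja}{\Objb}\definedas\gcA_{\Obja\Objb}$, $\NomSet{\CatE}{\Obja}{\Objb}\definedas\gcQ_{\Obja\Objb}$, and $\nmincompat{\Obja}{\Objb}\definedas\gcC_{\Obja\Objb}$. Unpacking the composition morphism $\enmof{}{\Obja}{\Objb}{\Objc}\colon\homobject{}{\Obja}{\Objb}\gcPlus\homobject{}{\Objb}{\Objc}\to\homobject{}{\Obja}{\Objc}$ via Definition \ref{def:gcPlus}: the $\gcmaf$-part furnishes morphism composition $\mthen$, while the $\gcmab$-part sends $\nora\in\gcQ_{\Obja\Objc}$ to the pair $\tupp{\morb\mapsto\northena{\nora}{\morb},\,\mora\mapsto\northenb{\mora}{\nora}}$, supplying both norphism-composition operations at once. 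Existence of the $\gcmae$-part encodes exactly the implication $\nmincompat{\Obja}{\Objb}(\northena{\nora}{\morb},\mora)\vee\nmincompat{\Objb}{\Objc}(\northenb{\mora}{\nora},\morb)\Rightarrow\nmincompat{\Obja}{\Objc}(\nora,\mora\mthen\morb)$, which packages both (\ref{eq:cond1}) and (\ref{eq:cond2}).

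The neutrality axioms (\ref{eq:neutrality1}) and (\ref{eq:neutrality2}) then follow from the unit triangle (\ref{eq:enrichment-unital2}): since $\enidof{}{\Obja}$ selects $\catidat{\Obja}\in\gcA_{\Obja\Obja}$ on its answer component and factors through $\gcfalse$ on questions, tracing the $\gcmab$-part of the composite through the left-unitor side of the triangle directly reduces to $\northenb{\catidat{\Obja}}{\nora}=\nora$; the right-unitor side symmetrically yields $\northena{\nora}{\catidat{\Objb}}=\nora$. The three remaining identities---(\ref{eq:compat-comp1}), (\ref{eq:compat-comp2}), and the action-commutation (comm)---all emerge from a single diagram chase of the associativity pentagon (\ref{eq:enrichment-assoc2-act}) applied to a norphism $\nora\colon\Obja\norto\Objd$ along an auxiliary chain $\Obja\to\Objb\to\Objc\to\Objd$. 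Unpacking the $\gcmab$-components of both composites and reshuffling via the canonical $\gcPlus$-associator, one sees three slot-wise equalities emerge in the nested product $(\gcQ_{\Obja\Objb}^{\gcA_{\Objb\Objc}}\cartprod\gcQ_{\Objb\Objc}^{\gcA_{\Obja\Objb}})^{\gcA_{\Objc\Objd}}\cartprod\gcQ_{\Objc\Objd}^{\gcA_{\Obja\Objb}\cartprod\gcA_{\Objb\Objc}}$: the ``$\gcQ_{\Objc\Objd}$''-slot yields (\ref{eq:compat-comp1}); the ``$\gcQ_{\Obja\Objb}$''-slot yields (\ref{eq:compat-comp2}); and the mixed ``$\gcQ_{\Objb\Objc}$''-slot yields (comm). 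The $\gcmae$-components of the two paths agree automatically since $\Bool$ is thin, so they impose no further constraints.

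Finally, to justify that not every nategory arises from a \prcat-enrichment, I would point to Example \ref{exa:weak}, where $\northenb{\mora}{\nora}=\emptyset$ for every $\mora,\nora$; for any nonempty norphism $\nora$ this violates (\ref{eq:neutrality1}). The main obstacle I anticipate is bookkeeping for the pentagon: one must spell out the $\gcPlus$-associator on the nested exponential above carefully enough to read off three distinct norphism identities from a single commutative diagram, but no deeper ideas are needed---the calculation is mechanical once the coordinates are pinned down.
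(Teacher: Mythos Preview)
Your proposal is correct and follows essentially the same approach as the paper: extract hom-sets, nom-sets, and incompatibility from the hom-objects; read off morphism composition and the two norphism operations from the forward and backward components of $\enmof{}{\Obja}{\Objb}{\Objc}$, with the $\gcmae$-component yielding (\ref{eq:cond1}) and (\ref{eq:cond2}); derive (\ref{eq:neutrality1}), (\ref{eq:neutrality2}) from the unit triangles and (\ref{eq:compat-comp1}), (\ref{eq:compat-comp2}), (comm) from the associativity square, using thinness of $\Bool$ to dispense with the $\gcmae$-level. Your counterexample via Example~\ref{exa:weak} suffices; the paper additionally invokes the integer-valued $\Berg$ variant to show (\ref{eq:compat-comp1})/(\ref{eq:compat-comp2}) can fail independently, but this is not needed for the statement as written.
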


\begin{proof}
    Suppose somebody has provided us with a \prcat-enriched category $\CatE = \tupp{\ObE, \homobjectset{}, \enm, \enid_{}}$.
    Using this data we will describe a nategory $\CatC$ with the above-stated properties.

    For the objects of \CatC, we set $\ObC \definedas \ObE$.

    For every pair of objects $\Obja,\Objb \setin \ObC$, we have an object $\homobject{}{\Obja}{\Objb}$ of \prcat.
    This is a tuple
    \begin{equation}\label{eq:tup}
        \homobject{}{\Obja}{\Objb} = \tupp{\gcQ, \gcA, \gcC},
    \end{equation}
    which we interpret as
    \begin{equation}\label{eq:tup-interpret}
        \homobject{}{\Obja}{\Objb} =\tupp{\NomSet{\CatC}{\Obja}{\Objb}, \HomSet{\CatC}{\Obja}{\Objb}, \nmincompat{\Obja}{\Objb}},
    \end{equation}
    thereby setting $\NomSet{\CatC}{\Obja}{\Objb} \definedas \gcQ$, $\HomSet{\CatC}{\Obja}{\Objb}\definedas \gcA$, and $\nmincompat{\Obja}{\Objb} \definedas \gcC$.

    Next, for each $\Obja \setin \Ob$ we have an identity-choosing morphism
    \begin{equation}\label{eq:provided}
        \enidof{}{\Obja} \colon \idmoncat_\prcat \mtoin{\prcat} \homobject{}{\Obja}{\Obja}.
    \end{equation}
    Because $\idmoncat_\prcat=\tupp{\makeset{\theonlyQ}, \makeset{\theonlyA}, \gcfalse}$, this is a morphism
    \begin{equation}\label{eq:provided-unroll}
        \enidof{}{\Obja}
        \colon
        \tupp{\makeset{\theonlyQ}, \makeset{\theonlyA}, \gcfalse}
        \mtoin{\prcat}
        \tupp{
            \NomSet{\CatC}{\Obja}{\Obja},
            \HomSet{\CatC}{\Obja}{\Obja},
            \nmincompat{\Obja}{\Obja}
        },
    \end{equation}
    which consists of three functions $\tupp{\gcmab, \gcmaf ,\gcmae}$.
    The forward map $\gcmaf: \makeset{\theonlyA} \mto \HomSet{\CatC}{\Obja}{\Obja}$ chooses our (candidate) identity morphism, so we set $ \idname_\Obja \definedas \gcmaf(\theonlyA)$.
    The backward map $\gcmab
        \colon
        \NomSet{\CatC}{\Obja}{\Obja}
        \to
        \makeset{\theonlyQ}$
    is uniquely determined and does not carry any information.
    As for $\gcmae$, it is a dependent function of the type
    \begin{equation}
        \gcmae : \{\gcq_2: \NomSet{\CatC}{\Obja}{\Obja}, \gca_1: \makeset{\theonlyA} \}
        \to \gcfalse (\gcmab(\gcq_2), \gca_1)
        \mtoin\Bool \nmincompat{\Obja}{\Obja} ( \gcq_2, \gcmaf(\gca_1) ).
    \end{equation}
    Evaluated at $\gcq_2 = \nora$ and $\gca_1 = \theonlyA$, we have
    \begin{equation}
        \gcmae(\nora,\theonlyA) : \false \mtoin\Bool
        \nmincompat{\Obja}{\Obja} ( \nora, \catid_{\Obja} ).
    \end{equation}
    Because $\false$ is an initial object in $\Bool$, such a morphism always exists,
    no matter what the right-hand side is.
    Therefore, this condition does not carry any additional information.

    Now let us fix three objects $\Obja,\Objb,\Objc$ and consider the composition morphism
    \begin{equation}\label{eq:beta}
        \enmof{}{\Obja}{\Objb}{\Objc}
        \colon \homobject{}{\Obja}{\Objb}
        \mtimescatob_\prcat \homobject{}{\Objb}{\Objc}
        \mtoin\prcat \homobject{}{\Obja}{\Objc}.
    \end{equation}
    Rewriting the hom-objects as tuples and using abbreviated notation we have
    \begin{equation}\label{eq:beta-ours2}
        \enmof{}{\Obja}{\Objb}{\Objc} \colon \tupp{
            \Nab,
            \Hab,
            \nmincompat{\Obja}{\Objb}
        }
        \mtimescat_\prcat
        \tupp{
            \Nbc,
            \Hbc,
            \nmincompat{\Objb}{\Objc}
        }
        \mtoin\prcat \\
        \tupp{
            \Nac,
            \Hac,
            \nmincompat{\Obja}{\Objc}
        }.
    \end{equation}
    Expanding using the definition of $\mtimescat_\prcat$ we find
    \begin{equation}\label{eq:themorph}
        \enmof{}{\Obja}{\Objb}{\Objc} \colon
        \tupp{
            \Nab^{\Hbc} \cartprod \Nbc^{\Hab},
            \Hab \cartprod \Hbc,
            \nmincompat{\Obja}{\Objb} \gcCplus \nmincompat{\Objb}{\Objc}
        }
        \mtoin\prcat
        \tupp{
            \Nac,
            \Hac,
            \nmincompat{\Obja}{\Objc}
        }.
    \end{equation}
    Such a morphism corresponds to three maps $\tupp{\gcmbb, \gcmbf ,\gcmbe}$.
    The forward map $\gcmbf $ has type
    \begin{equation}\label{eq:morcomp}
        \gcmbf \colon \HomSet{\CatC}{\Obja}{\Objb}\cartprod \HomSet{\CatC}{\Objb}{\Objc}
        \to \HomSet{\CatC}{\Obja}{\Objc},
    \end{equation}
    and we use it to define morphism composition `` $\mthenof\CatC$'' in our nategory.
    The backward map $\gcmbb$ has type
    \begin{equation}\label{eq:con-back}
        \gcmbb \colon \Nac \to \Nab^{\Hbc} \cartprod \Nbc^{\Hab},
    \end{equation}
    which, after splitting into two maps (using the universal property of the product) and currying, specifies two maps
    \begin{align} \label{eq:con-back2}
        \northenasymb & \colon \Nac \cartprod \Hbc \to \Nab,
        \\
        \northenbsymb & \colon \Hab \cartprod \Nac \to \Nbc.
    \end{align}
    As for the dependent function $\gcmbe$, given $\nora: \Nac$, $\mora: \Hab$, $\morb: \Hbc$ we have
    \begin{equation}\label{eq:proc1}
        \gcmbe(\nora,\tup{\mora,\morb})
        \colon
        (\nmincompat{\Obja}{\Objb} \gcCplus \nmincompat{\Objb}{\Objc}) ( \tupp{ (\nora \northenasymb -), (- \northenbsymb \nora)}, \tup{\mora,\morb})
        \mtoin\Bool \nmincompat{\Obja}{\Objc} ( \nora, \morab ).
    \end{equation}
    Expanding more,
    \begin{equation}\label{eq:proc2}
        \gcmbe(\nora,\tup{\mora,\morb})
        \colon
        \nmincompat{\Obja}{\Objb} (\nora \northenasymb \morb, \mora)
        \vee
        \nmincompat{\Objb}{\Objc} (\mora \northenbsymb \nora, \morb)
        \mtoin\Bool
        \nmincompat{\Obja}{\Objc} ( \nora, \morab ),
    \end{equation}
    which is equivalent to having two maps
    \begin{align}
        \gcmbe_1(\nora,\tup{\mora,\morb})\label{eq:proc3-maps}
        \colon
        \nmincompat{\Obja}{\Objb} (\northena{\nora}{\morb} , \mora)
        \mtoin\Bool
        \nmincompat{\Obja}{\Objc} ( \nora, \morab ), \\
        \gcmbe_2(\nora,\tup{\mora,\morb})\label{eq:proc4-maps}
        \colon
        \nmincompat{\Objb}{\Objc}(\northenb{\mora}{\nora}, \morb)
        \mtoin\Bool
        \nmincompat{\Obja}{\Objc} ( \nora, \morab ).
    \end{align}
    These witness the implications which give us (\ref{eq:cond1}) and (\ref{eq:cond2}).

    Next we move to the commutative diagrams in the definition of enriched category.
    As a general observation, we note that for diagrams in $\GSet$ we only need to consider commutativity on the level of ``forward maps'' and ``backward maps'' respectively.
    We do not need to worry about the conditions (\ref{eq:010-gset-mor-cond}), because for any two parallel morphisms this condition is the same, and hence ``commutativity'' is trivially satisfied.

    \paragraph*{Conditions from the associativity diagram for enriched categories}
    We now consider the diagram (\ref{eq:enrichment-assoc2-act}), in the case of \prcat.
    \proceedings{}{
        \begin{center}
            \includesag{associativity-gc}
        \end{center}
    }
    On the level of ``forward'' maps, this commutative diagram encodes that morphism composition must be associative.
    One the level of ``backward'' maps, it implies that the following diagram must commute:
    \begin{center}
        \includesag{associativity-gc-back}
    \end{center}
    Let us look at the two different routes through this diagram.
    For the left-hand route, note that
    \begin{equation}
        \begin{aligned}
            \gcmBdec{(\enmof{}{\Obja}{\Objb}{\Objc}\gcPlus \catid_{\Objc\Objd})} & =\tupp{\gcmF{\catid_{\Objd}}\functhen(-)\functhen \gcmBdec{\enmof{}{\Obja}{\Objb}{\Objc}},\gcmFdec{\enmof{}{\Obja}{\Objb}{\Objc}}\functhen (-)\functhen \gcmB{\catid_{\Objc\Objd}}} \\
                                                                                 & =\tupp{(-)\functhen \gcmBdec{\enmof{}{\Obja}{\Objb}{\Objc}}, \gcmFdec{\enmof{}{\Obja}{\Objb}{\Objc}}\functhen (-)},
        \end{aligned}
    \end{equation}
    and so
    \begin{equation}\label{eq:left-hand-route-assoc}
        \begin{aligned}
            \gcmBdec{\enmof{}{\Objb}{\Objc}{\Objd}}\functhen \gcmBdec{(\enmof{}{\Obja}{\Objb}{\Objc}\gcPlus \catid_{\Objc\Objd})} & \colon
            \gcQ_{\Obja\Objd}\to \gcQ_{\Obja\Objc}^{\gcA_{\Objc\Objd}}\times \gcQ_{\Objc\Objd}^{\gcA_{\Obja\Objc}} \to \left(\gcQ_{\Obja\Objb}^{\gcA_{\Objb\Objc}}\times \gcQ_{\Objb\Objc}^{\gcA_{\Obja\Objb}}\right)^{\gcA_{\Objc\Objd}}\times \gcQ_{\Objc\Objd}^{\gcA_{\Obja\Objb}\times \gcA_{\Objb\Objc}} \\
            \gcq                                                                                                                  & \mapsto \tupp{\northena{\gcq}{(-)}, \northenb{(-)}{\gcq}}\mapsto \tupp{\northena{\gcq}{(-)}\functhen \gcmBdec{\enmof{}{\Obja}{\Objb}{\Objc}}, \gcmFdec{\enmof{}{\Obja}{\Objb}{\Objc}}\functhen \northenb{(-)}{\gcq}}.
        \end{aligned}
    \end{equation}
    For the right-hand route, note that
    \begin{equation}
        \begin{aligned}
            \gcmBdec{(\catid_{\Obja\Objb}\gcPlus \enmof{}{\Objb}{\Objc}{\Objd})} & =\tupp{\gcmFdec{\enmof{}{\Objb}{\Objc}{\Objd}}\functhen (-)\functhen \gcmB{\catid_{\Obja\Objb}}, \gcmF{\catid_{\Obja\Objb}}\functhen (-)\functhen \gcmBdec{\enmof{}{\Objb}{\Objc}{\Objd}}} \\
                                                                                 & =\tupp{\gcmFdec{\enmof{}{\Objb}{\Objc}{\Objd}}\functhen (-), (-)\functhen \gcmBdec{\enmof{}{\Objb}{\Objc}{\Objd}}},
        \end{aligned}
    \end{equation}
    and so
    \begin{equation}\label{eq:right-hand-route-assoc}
        \begin{aligned}
            \gcmBdec{\enmof{}{\Obja}{\Objb}{\Objd}}\functhen \gcmBdec{(\catid_{\Obja\Objb}\gcPlus \enmof{}{\Objb}{\Objc}{\Objd})} & \colon
            \gcQ_{\Obja\Objd}\to \gcQ_{\Obja\Objb}^{\gcA_{\Objb\Objd}}\times \gcQ_{\Objb\Objd}^{\gcA_{\Obja\Objb}}
            \to \gcQ_{\Obja\Objb}^{\gcA_{\Objb\Objc}\times \gcA_{\Objc\Objd}}\times \left( \gcQ_{\Objb\Objc}^{\gcA_{\Objc\Objd}}\times \gcQ_{\Objc\Objd}^{\gcA_{\Objb\Objc}}\right)^{\gcA_{\Obja\Objb}} \\
            \gcq                                                                                                                  & \mapsto \tupp{\northena{\gcq}{(-)},\northenb{(-)}{\gcq}}\mapsto \tupp{\gcmFdec{\enmof{}{\Objb}{\Objc}{\Objd}}\functhen \northena{\gcq}{(-)}, \northenb{(-)}{\gcq}\functhen \gcmBdec{\enmof{}{\Objb}{\Objc}{\Objd}} }.
        \end{aligned}
    \end{equation}

    Instead of now applying $\gcmB{\associator}$ directly, which is an obvious map but messy to write down, we evaluate the functions we obtained from our calculations for the left- and right-hand routes.
    Given $\tupp{\mora,\morb,\morc}\in \gcA_{\Obja\Objb}\times \gcA_{\Objb\Objc}\times \gcA_{\Objc\Objd}$, evaluating the two components of (\ref{eq:left-hand-route-assoc}) we find
    \begin{equation}
        (\gcq \northenasymb (-) \functhen \gcmBdec{\enmof{}{\Obja}{\Objb}{\Objc}})(\morc)=
        \gcmBdec{\enmof{}{\Obja}{\Objb}{\Objc}}(\northena{\gcq}{\morc})\colon \tupp{\morb,\mora}\mapsto \tupp{\northena{(\northena{\gcq}{\morc})}{\morb}, \northenb{\mora}{(\northena{\gcq}{\morc})}},
    \end{equation}
    and
    \begin{equation}
        (\gcmFdec{\enmof{}{\Obja}{\Objb}{\Objc}}\functhen \northenb{(-)}{\gcq})(\tupp{\mora,\morb})=\northenb{(\mora\functhen \morb)}{\gcq},
    \end{equation}
    respectively.

    For the right-hand route, evaluating (\ref{eq:right-hand-route-assoc}) gives
    \begin{equation}
        (\gcmFdec{\enmof{}{\Objb}{\Objc}{\Objd}}\functhen \northena{\gcq}{(-)})(\tupp{\morb,\morc})=\northena{\gcq}{(\morb\functhen \morc)},
    \end{equation}
    and
    \begin{equation}
        (\northenb{(-)}{\gcq} \functhen \gcmBdec{\enmof{}{\Objb}{\Objc}{\Objd}})(\mora)=\gcmBdec{\enmof{}{\Objb}{\Objc}{\Objd}}(\northenb{\mora}{\gcq}) \colon \tupp{\morc,\morb}\mapsto \tupp{\northena{(\northenb{\morb}{\gcq})}{\mora}, \northenb{\morb}{(\northenb{\mora}{\gcq})}}.
    \end{equation}
    By comparing the two routes, we obtain the conditions
    \begin{equation}
        \begin{aligned}
            \northenb{(\mora\functhen \morb)}{\gcq}    & = \northenb{\morb}{(\mora \northenbsymb \gcq)}, \\
            \northena{\gcq}{(\morb\functhen \morc)}    & =\northena{(\northena{\gcq}{\morc})}{\morb}, \\
            \northenb{\mora}{(\northena{\gcq}{\morc})} & =\northena{(\northenb{\mora}{\gcq})}{\morc}.
        \end{aligned}
    \end{equation}

    \paragraph*{Conditions from the unitality diagrams for enriched categories}
    Consider the right-hand portion of the diagram (\ref{eq:enrichment-unital2}), now for the case of \prcat.
    \proceedings{}{
        \begin{center}
            \includesag{unitality-gc}
        \end{center}
    }
    On the level of forward maps, this diagram encodes the condition that $\catid_\Obja\functhen \gca=\gca$ for any morphism $\gca \colon \Obja \mto \Objb$.
    On the level of backward maps, it amounts to the commutative diagram
    \begin{center}
        \includesag{lunitor-pn-flat}
    \end{center}
   Computing the right-hand route, we have
    \begin{equation}
        \begin{aligned}
            \gcmBdec{(\enidof{}{\Obja} \gcPlus \catid_{\homobject{}{\Obja}{\Objb}})}\colon \gcQ_{\Obja\Obja}^{\gcA_{\Obja\Objb}}\times \gcQ_{\Obja\Objb}^{\gcA_{\Obja\Obja}} & \to \singletonQ^{\gcA_{\Obja\Objb}}\times \gcQ_{\Obja\Objb}^{\singletonA} \\
            \tupp{\varphi_{\Obja\Obja},\varphi_{\Obja\Objb}}                                                                                                                 & \mapsto \tupp{!, \gcmF{\enidof{}{\Obja}}\functhen \varphi_{\Obja\Objb}\functhen \gcmB{\catid_{\homobject{}{\Obja}{\Objb}}}}=\tupp{!, \theonlyA \mapsto \varphi_{\Obja\Obja}(\catid_\Obja)},
        \end{aligned}
    \end{equation}
    and
    \begin{equation}
        \begin{aligned}
            \gcmBdec{((\enidof{}{\Obja} \gcPlus \catid_{\homobject{}{\Obja}{\Objb}})\functhen \enmof{}{\Obja}{\Obja}{\Objb})}\colon
            \gcQ_{\Obja\Objb} & \to \gcQ_{\Obja\Obja}^{\gcA_{\Obja\Objb}}\times\gcQ_{\Obja\Objb}^{\gcA_{\Obja\Obja}}\to \singletonQ^{\gcA_{\Obja\Objb}}\times \gcQ_{\Obja\Objb}^{\singletonA} \\
            \gcq              & \mapsto \gcmF{\enmof{}{\Obja}{\Obja}{\Objb}}(\gcq)=\tupp{\northena{\gcq}{(-)},\northenb{(-)}{\gcq}} \\
                              & \mapsto \tupp{\gcmF{\catid_{\homobject{}{\Obja}{\Objb}}}\functhen \northena{\gcq}{(-)} \functhen \gcmB{\enidof{}{\Obja}}, \gcmF{\enidof{}{\Obja}}\functhen \northenb{(-)}{\gcq}\functhen \gcmB{\catid_{\homobject{}{\Obja}{\Objb}}}} \\
                              & =\tupp{!,\theonlyA \mapsto (\northenb{\catid_\Obja}{\gcq})},
        \end{aligned}
    \end{equation}
    which, when compared with $\gcmB{\leftunitor} \colon \gcq \mapsto \tupp{!, \theonlyA \mapsto \gcq} $, gives the condition
    \begin{equation}
         \northenb{\catid}{\gcq}=\gcq.
    \end{equation}

    The left-hand portion of the diagram (\ref{eq:enrichment-unital2}) may be treated analogously and gives rise the to the conditions
    \begin{equation}
        \gca\functhen \catid_\Objb=\gca
        \qquad \text{and} \qquad
        \northena{\gcq}{ \catid}=\gcq.
    \end{equation}
\end{proof}

\begin{remark}
Exact nategories are those in which the implications in (\ref{eq:cond1}) and (\ref{eq:cond2}) are in fact equivalences. In the above proof, this corresponds to the morphisms (\ref{eq:proc3-maps}) and (\ref{eq:proc4-maps}) in \Bool being identities. 
\end{remark}

(\ref{prop:PN-enrich}) begs the question as to why we don't include the additional properties stated there as part of our definition of what a nategory is. Our reason is that these properties fail for examples of interest to us. 

For example, in applications it is normal that physical measurements and numerical representations on a computer are given only to a certain accuracy. This motivates (\ref{ex:berg-integer-version}), where we only allow integer values for norphisms. However, in that example, the properties (\ref{eq:compat-comp2}) and (\ref{eq:compat-comp1}) are not satisfied. To see this, consider (\ref{eq:compat-comp2})and consider morphisms $\morb$ and $\morc$ in $\Berg$ with $\lengthfun(\morb)=\lengthfun(\morc)=1.5$. For a norphism $\gcq$ of the appropriate signature we have 
\begin{equation}
\northena{\gcq}{(\morb\functhen \morc)}=\thefloor{\gcq - \lengthfun(\morb \mthen \mora)}=\thefloor{\gcq-3}
\end{equation}
on the one hand, and 
\begin{equation}
\northena{(\northena{\gcq}{\morc})}{\morb}=\thefloor{\thefloor{\gcq-1.5}-1.5}
\end{equation}
on the other. If we choose $\gcq=10$, for instance, the previous expressions evaluate to 7 and 6, respectively. 

As a different example, the properties (\ref{eq:neutrality1}) and (\ref{eq:neutrality2}) fail in Example \ref{exa:weak}. Indeed, there $\northenb{\catid_\Obja}{\nora}=\emptyset$ and $\northena{\nora}{\catid_\Objb}=\emptyset$, even though, in general, we would have $\nora \neq \emptyset$.

    \ifthenelse{\boolean{proceedings}}{}{
        \clearpage}
    \section{Conclusions}
This work showed that we can encode negative information in a categorical manner such that  norphisms (negative arrows) and morphisms (positive arrows) are equal citizens in the theory. Norphisms and morphisms are, in general, not mutually exclusive; they give complementary information.

We have seen how, in the category \Berg, norphisms can represent negative results such as lower bounds on distances between two locations.
A path planning algorithm must construct a morphism (a path) \emph{and} construct a norphism (a bound) to prove that the path is optimal.
We have also seen how, in the category \DP, norphisms can represent design impossibility results.

After defining nategories as categories with extra structure, we showed a way to encode this new concept using categories enriched in the dialecta category $\GSet$. This approach, however, introduces some compatibility properties for norphism composition that we do not wish to include in our general notion of nategory. Future work includes exploring if nategories might be recovered, on the nose, via a different enrichment, as well as studying various typical categorical concepts in the context of nategories. We would also be very happy to discover further interesting examples.

%
%
%
%

    \section*{Acknowledgments}
    The authors would like to thank David I.
    Spivak for fruitful discussions.
    We also thank Valeria de Paiva, Brendan Fong, and David Yetter for helpful comments.
    Gioele Zardini was supported by the Swiss National Science Foundation, under NCCR Automation, grant agreement 51NF40\_180545.

    \bibliographystyle{eptcs}
    \bibliography{generic}

    \appendix
    \ifthenelse{\boolean{proceedings}}{}{
        \appendix
        \input{AA-proofs}}

\end{document}